\theoremstyle{plain}
\newtheorem{The}{Theorem}
\newtheorem*{The*}{Theorem}
\newtheorem{Pro}{Proposition}
\newtheorem{Lem}{Lemma}
\newtheorem{Cor}{Corollary}
\newtheorem*{Cor*}{Corollary}
\theoremstyle{definition}
\newtheorem{Rem}{Remark}
\newtheorem*{Rem*}{Remark}
\numberwithin{equation}{section}
\DeclareMathOperator{\GL}{GL}
\DeclareMathOperator{\SL}{SL}
\DeclareMathOperator{\SO}{SO}
\DeclareMathOperator{\Span}{Span}
\newcommand{\R}{\mathbb{R}}
\newcommand{\C}{\mathbb{C}}
\newcommand{\N}{\mathbb{N}}
\newcommand{\Z}{\mathbb{Z}}
\renewcommand{\P}{\mathbb{P}}
\renewcommand{\H}{\mathbb{H}}  
\newcommand{\CP}{\mathbb{CP}}
\newcommand{\jj}{\mathbbm j}
\begin{document}

\title{Isothermic constrained Willmore tori in $3$-space}

\author{Lynn Heller}

\address{ Institut f\"ur Differentialgeometrie\\  Leibniz Universit{\"a}t Hannover\\ Welfengarten 1, 30167 Hannover}

 \email{lynn.heller@math.uni-hannover.de}

 \author{Sebastian Heller}
\address{Department of Mathematics\\
University of Hamburg\\
20146 Hamburg, Germany
 }
 \email{seb.heller@gmail.com}

\author{Cheikh Birahim Ndiaye}
\address{Department of Mathematics of Howard University\\
204 Academic Support Building B
Washington, DC 20059k \\ USA
 }
 \email{cheikh.ndiaye@howard.edu}


 \date{\today}

\noindent

\begin{abstract} 
We show that the  homogeneous and the $2$-lobe Delaunay tori in the $3$-sphere provide the only isothermic constrained Willmore tori in $3$-space with Willmore energy below $8\pi$. In particular, every constrained Willmore torus with Willmore energy below $8\pi$ and non-rectangular conformal class is non-degenerated.
 \end{abstract}

\maketitle



\section{Introduction}
The Willmore functional of an 
immersions $f\colon M\to S^3$ from a oriented surface $M$ into the $3$-sphere is given by
\[
\mathcal{W}(f)=\int_M (H^2+1) dA
\] 
where $H$ is the  mean curvature and $dA$ is the induced area form of $f$. Geometrically speaking $\mathcal W$ measures the roundness of a surface, physically  the degree of bending, and in biology $\mathcal W$ appears as a special instance of the Helfrich energy for cell membranes.  The Willmore functional is invariant 
under Moebius transformations (conformal transformations of the $3$-sphere with its standard conformal structure).
Critical points of the Willmore functional are Willmore surfaces. Examples are given by minimal surfaces in the
Riemannian subgeometries of constant curvature of the conformal 3-sphere.

If $M$ is equipped with a Riemann surface structure, it is natural to consider only conformal immersions $f\colon M\to S^3$,
i.e., the complex structure is given by rotating tangent vectors by $\tfrac{\pi}{2}$ in the 3-space. Critical points of the
Willmore functional restricted to a given conformal class are called constrained Willmore surfaces.
The conformal constraint augments the Euler-Lagrange equation by a holomorphic quadratic differential  $\omega\in H^0(K^2_M)$ paired with the trace-free second fundamental  form $\mathring{A}$ of the immersion
\[
\triangle H+ 2H(H^2+1-K)=\,<\omega,\mathring{A}>,
\]
see \cite{BPP,Schaetzle2}.
The first examples of these constrained Willmore tori are given by those of constant mean curvature (CMC) in a $3$-dimensional space form.

It is well-known (and obvious by the holomorphicity of the Hopf differential) that CMC (constant mean curvature) surfaces admit conformal curvature line parametrizations away from their umbilical points. Surfaces with this property are called {\em isothermic}.
Isothermic surfaces play an important role in conformal surface geometry, see  \cite{BuPP,BuCa}, since the notion is independent of the specific metric in the conformal class of the ambient manifold.
For a compact surface $M$, there is a natural map from the space of immersions into the 3-space to the Teichm\"uller space
\[\pi \colon f\in \text{Imm}(M,S^3)\mapsto [f^*g_{round}]\in \text{Teich}(M),\]
where $g_{round}$ is the round metric on $S^3,$ and $[.]$ denotes the conformal class in the Teichm\"uller space.
The map $\pi$ is a submersion except at isothermic immersions, see \cite{BPP}. Hence, the Lagrange multiplier for isothermic constrained Willmore surfaces  -- the holomorphic quadratic differential -- is no longer uniquely determined by the immersion.  
  
  In this paper, we restrict to compact Riemann surfaces of genus 1. We classify  isothermic constrained Willmore tori
  with Willmore energy below $8\pi.$ Our main theorem is the following one.

\begin{The}\label{classification}
Isothermic constrained Willmore tori in the conformal 3-sphere with Willmore energy below $8 \pi$ are CMC surfaces in the round 3-sphere.
\end{The}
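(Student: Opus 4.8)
The plan is to classify isothermic constrained Willmore tori with $\mathcal W < 8\pi$ by combining the structural theory of constrained Willmore surfaces with the sharp energy quantization results available in this range. The starting observation is that for an isothermic immersion the conformal constraint degenerates: $\pi$ fails to be a submersion precisely at isothermic surfaces, so the holomorphic quadratic differential $\omega$ entering the Euler--Lagrange equation is no longer unique. I would exploit this non-uniqueness to produce an associated family of flat connections, i.e. to show that an isothermic constrained Willmore torus carries a one-parameter family of solutions to the constrained Willmore equation, putting it into the integrable-systems framework where one has a spectral curve / holomorphic structure attached to the surface. The isothermic condition together with the constrained Willmore condition is exactly the condition that the surface be, in the appropriate sense, an ``isothermic constrained Willmore'' surface admitting a $\C^*$ (or $\R^{>0}$) family of Darboux-type transforms, and the energy bound $8\pi$ is what will force this family to be trivial enough to pin down the surface.

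\medskip

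The core of the argument will be an energy threshold dichotomy. I would first recall that $8\pi$ is the relevant sharp bound: by the Li--Yau inequality, $\mathcal W < 8\pi$ forces the immersion $f$ to be an embedding, and more importantly it restricts the possible degeneracies. The key steps, in order, would be: (1) use isothermicity to write down the associated family and extract the non-uniqueness of $\omega$ as a genuine extra symmetry; (2) translate this into a statement that the torus is (constrained) Willmore \emph{and} isothermic, hence lies in the intersection studied via the theory of \cite{BPP,BuPP,BuCa}, where isothermic + constrained Willmore with a non-trivial family is known to force a CMC structure in \emph{some} space form; (3) use the $\mathcal W < 8\pi$ bound to eliminate the non-spherical space forms (Euclidean and hyperbolic CMC tori), by showing their Willmore energies necessarily meet or exceed the threshold except for the spherical CMC case. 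The conclusion that the space form is the round $S^3$ is what makes the statement clean: CMC surfaces in $\R^3$ or $\H^3$ that are compact tori either do not exist in the required range or carry too much energy.

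\medskip

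The decisive technical move is showing that isothermic plus constrained Willmore, below $8\pi$, forces \emph{constant} mean curvature in a space form rather than merely the weaker isothermic structure. Abstractly an isothermic surface need not be CMC; the conformal constraint differential $\omega$ and the Hopf differential are both holomorphic quadratic differentials, but on a torus $H^0(K_M^2)$ is one-dimensional, so $\omega$ and $\mathring A$ (viewed through the Hopf differential) are \emph{forced to be proportional}. I would push exactly this dimension count: on the torus $K_M$ is trivial, $K_M^2$ is trivial, and $H^0(K_M^2)\cong\C$, so the holomorphic quadratic differential is a constant multiple of $dz^2$ and the Hopf differential is too. Plugging the proportionality $\omega = c\,\mathring A\,$ back into $\triangle H + 2H(H^2+1-K) = \langle \omega,\mathring A\rangle$ and using the isothermic (conformal curvature line) coordinate should reduce the Euler--Lagrange equation to an ODE in the curvature line parameter whose bounded periodic solutions below the $8\pi$ energy are exactly the homogeneous tori and the $2$-lobe Delaunay tori, both of which are CMC in $S^3$.

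\medskip

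The main obstacle I anticipate is step (3): rigorously ruling out CMC (or isothermic constrained Willmore) tori in the \emph{Euclidean and hyperbolic} subgeometries within the energy window. The algebraic reduction on the torus is essentially forced by the dimension count, but translating ``isothermic + constrained Willmore below $8\pi$'' into ``CMC in the round $S^3$ specifically'' requires controlling which space form can host a compact torus of the given type at that energy. I expect to need the Li--Yau bound to first restrict to embedded tori, and then a case analysis of the reduced ODE / spectral data to show the non-spherical solutions either fail to close up into a torus or exceed $8\pi$; this is where the quantitative energy threshold, rather than soft arguments, does the real work, and matching the surviving solutions to the homogeneous and $2$-lobe Delaunay families is the crux.
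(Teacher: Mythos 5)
Your proposal contains a genuine gap at its decisive step, and it misses the case that constitutes the actual content of the paper. The proportionality argument in your third paragraph does not work: on a torus $H^0(K_M^2)\cong\C$ is indeed one-dimensional, but the trace-free second fundamental form $\mathring A$ (equivalently, the Hopf differential) of a general constrained Willmore torus is \emph{not} holomorphic, so it does not lie in $H^0(K_M^2)$ and no dimension count can force $\omega=c\,\mathring A$. Holomorphicity of the Hopf differential is equivalent to the surface being CMC in a space form --- that is precisely the conclusion you are trying to reach, so the argument is circular. Isothermicity only gives that the Hopf differential is real in conformal curvature line coordinates. The correct replacement for your steps (1)--(2) is Richter's theorem \cite{Richter}, which the paper quotes: isothermic constrained Willmore tori are \emph{locally} of constant mean curvature in a $3$-dimensional space form. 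The word ``locally'' matters, and it is exactly what your sketch suppresses.

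Because the CMC structure is only local, a torus in $S^3$ can cross the infinity boundary of $\mathcal H^3\subset S^3$ and be CMC with $|H|<1$ on the two hyperbolic balls --- the Babich--Bobenko tori. These are compact in $S^3$ but are not closed CMC surfaces in a complete space form, so none of the tools you invoke for step (3) applies: Alexandrov's maximum principle rules out closed CMC tori in $\R^3$ and $\mathcal H^3$, and the quantized energies $4\pi k$ handle compactified complete surfaces with ends (minimal with planar ends in $\R^3$ via \cite{KoKuSo}, CMC-$1$ in $\mathcal H^3$ via \cite{BohPet,Pirola}), but the Babich--Bobenko case escapes both. Excluding it is Theorem \ref{H3} and occupies Sections 2--3 of the paper: one shows the spectral curve is hyperelliptic of odd genus with a fixed-point-free real involution covering $\lambda\mapsto-\bar\lambda^{-1}$, that its branch points produce holomorphic sections of $V/L$ with $\Z_2$-monodromy corresponding to the four spin structures of $T^2$, and then a counting argument combined with the quaternionic Pl\"ucker estimate of \cite{FLPP} forces $\mathcal W\geq 8\pi$ (the low spectral genus cases $g\in\{0,1\}$ are handled separately via equivariance, the non-embedded figure-eight elastic curves, and Li--Yau). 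Your phrase ``a case analysis of the reduced ODE / spectral data'' contains none of these ideas. Finally, the finer identification with the homogeneous and $2$-lobed Delaunay families --- not needed for the theorem as stated, but claimed in your sketch --- rests on Brendle \cite{Brendle} and Andrews--Li \cite{AndrewsLi} (rotational symmetry of embedded CMC tori in $S^3$) together with the energy monotonicity along the families from \cite{KilianSchmidtSchmitt1}, not on a direct ODE analysis of periodic solutions below $8\pi$.
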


 \begin{figure}
\centering
\includegraphics[width=0.375\textwidth
]
{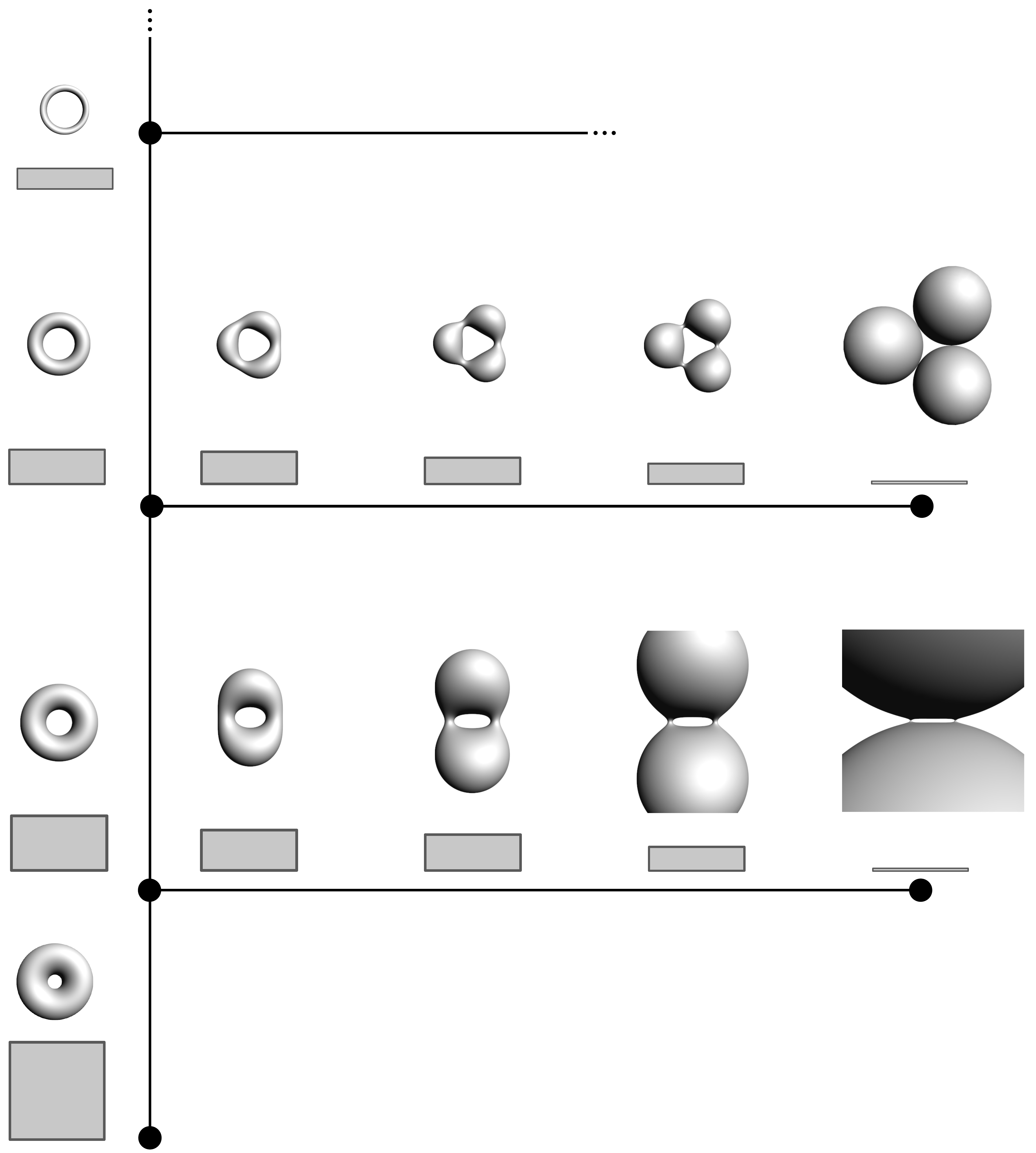}
\caption{
The vertical stalk represents the family of homogenous tori,
starting with the Clifford torus at the bottom.
Along this stalk are bifurcation points at which
the embedded Delaunay tori appear along the horizontal lines.
The rectangles indicate the conformal types. Images by Nicholas Schmitt.
}
\label{fig:torus-tree}
\end{figure}

\subsection*{Strategy of proof}

Richter \cite{Richter} shows that isothermic constrained Willmore tori in the conformal $3$-sphere are locally of constant mean curvature in a $3$-dimensional space form. The solution of the  Lawson and Pinkall-Sterling conjectures by Brendle \cite{Brendle} and Andrews-Li \cite{AndrewsLi} further gives that embedded CMC tori in the $3$-sphere are rotationally symmetric and thus consist  only of the families of $k$-lobed Delaunay tori \cite{KilianSchmidtSchmitt1}. Moreover, the Willmore energy along every embedded family is monotonically increasing in the conformal class $b$. Thus since for $k\geq3$ the $k$-lobes bifurcates from the homogenous tori with Willmore energy above $8\pi$, the $2$-lobed family is the CMC-family with minimal Willmore energy in their respective conformal classes. The aim is to exclude the existence of constrained Willmore surfaces of 
constant mean curvature in $\R^3$ or hyperbolic 3-space $\mathcal H^3$ that can be compactified to a torus in $S^3$ with Willmore energy below $8 \pi.$ By Li and Yau \cite{LiYau} these surfaces must be embedded.\\

The Alexandrov maximum principle \cite{Alexandrov} shows that there are no closed CMC tori with Willmore energy below $8\pi$ in $\R^3$ or  $\mathcal H^3$. The only non-closed CMC surfaces in $\R^3$ that can be compactified to conformal embeddings in $S^3$ are minimal surfaces with planar ends ($H\neq0$ is excluded by local analysis  \cite{KoKuSo}), which have quantized energy $4 \pi k,$ with $k\geq2$ being the number of ends. Thus, those surfaces have Willmore energy $\geq 8\pi$. Similar arguments work for constant mean curvature surfaces in $\mathcal H^3$ with mean curvature $H = 1$ giving  quantized Willmore energy $\mathcal W=4 \pi k,$
where $k\in\N$ denotes the number of ends see \cite{BohPet} and one-punctured CMC 1 torus in $\mathcal H^3$ does not exist by \cite{Pirola}.

To prove Theorem \ref{classification} it is thus sufficient to show that isothermic constrained Willmore tori in $S^3$,
whose intersection with $\mathcal H^3\subset S^3$ is 
of constant mean curvature,  cannot have Willmore energy below $8 \pi$. 
Those surfaces intersect the infinity boundary of  $\mathcal H^3\subset S^3$ -- a round 2-sphere -- with an angle $\alpha$ satisfying $\cos(\alpha) = H. $ In particular, the constant mean curvature must satisfy $|H| <1$ or the surface is entirely contained in $\mathcal H^3,$ and therefore cannot be embedded  by  maximum principle. It hence remains to show that CMC surfaces in $\mathcal H^3$ with mean curvature $|H| <1$ and Willmore energy below $8 \pi$ cannot be embedded, see Theorem \ref{H3}.
 
We will call isothermic constrained Willmore tori into $S^3$ which are CMC in $\mathcal H^3$ with $|H| <1$ on the intersection with the two hyperbolic balls Babich-Bobenko tori in the following.  The first examples have been constructed by Babich and Bobenko \cite{BabBob} in the case of $H=0$.  The main idea of the proof is now to use the quaternionic Pl\"ucker estimate \cite{FLPP}, which links lower bounds of the Willmore energy to the dimension of holomorphic sections of a certain quaternionic holomorphic vector bundle. This dimension is then related to the (necessarily odd) genus $g$ of the spectral curve for Babich-Bobenko tori.

The paper is organized as follows: In Section 2
we study the spectral curve of Babich-Bobenko tori in detail. In Section 3, we use the special structure of the spectral curve
to apply the Pl\"ucker estimate which yields a proof of Theorem \ref{H3}.

\subsection*{Acknowledgements}  The first author is supported by the DFG within the SPP {\em Geometry at Infinity}, and the second author is supported by RTG 1670 {\em Mathematics inspired by string theory and quantum field theory} funded by the  DFG. The second author would also like to thank the International Centre for Theoretical Sciences (ICTS), Bangalore, for hospitality during  the ICTS program on {\em Analytic and Algebraic Geometry}, where parts of
the computations have been performed.

\section{The constrained Willmore spectral curves of Babich-Bobenko tori} \label{isothermic}

We consider two different approaches to the spectral curve theory of Babich-Bobenko tori. The aim of this section is to show that these two approaches towards the spectral curve are in fact equivalent. The lightcone model one is used to show that the spectral curve of a Babich-Bebenko torus  -- the Riemann surface parametrizing the eigenlines of $d^\lambda_q$ --  is hyperelliptic, while the Pl\"ucker estimate uses the multiplier spectral curve, which by \cite{Bohle} corresponds to the spectral curve of $\nabla^\mu$ from the quaternionic approach. Subtleties arise from the non-uniqueness of the Lagrange multipliers.

 \subsection{Quaternionic geometry} The spectral curve theory for conformal immersions $f$ from a $2$-torus $T^2$ into the conformal 4-sphere has been 
 developed in \cite{BFLPP}, where $S^4$ is considered as the quaternionic projective space $\H P^1$. To every conformal 
 immersion $f$ the quaternionic  line bundle \[L= f^* \mathcal T \subset T^2 \times \H^2\] given by the pull-back 
 of the tautological bundle $\mathcal T$ of $\H P^1$ is associated. Another quaternionic  line bundle associated to $f$ is $V/L,$ where $V= T^2 \times \H^2.$ On $V/L$ there exists a natural quaternionic holomorphic structure $D$ (see \cite{BFLPP,Bohle} for a detailed definition and discussion) by demanding the projections of the constant sections $(0,1)$ and $(1,0)$ of $V$ to be holomorphic. The immersion $f$ is then recovered (up to conformal transformations) by 
\[[0,1] = - [1,0] f.\]

The (multiplier) spectral curve $\Sigma$ of a conformally immersed torus $f$ is the normalization of the Riemann surface parametrizing all holomorphic sections of $V/L$ with (complex) monodromy, i.e, every point of the spectral curve corresponds to a holomorphic section with monodromy \cite{BLPP}. Therefore, we can define maps from $\Sigma$ to $\C$ -- so-called monodromy maps $\nu_i$ -- by assigning to every point in $\Sigma$ the monodromy of the underlying holomorphic section along generators $\gamma_i$ of the fundamental group $\pi_1(T^2)$. 

Bohle \cite{Bohle} gives an alternative approach to the spectral curve for constrained Willmore tori. 
For constrained Willmore surfaces $f\colon M\longrightarrow S^3\subset S^4$ Bohle  defined the following $\C_*$-family of flat SL$(4,\C)$-connections
\begin{equation}\label{nabmu}\nabla^\mu=d+(\mu-1)A_\circ^{1,0}+(\mu^{-1}-1)A_\circ^{0,1}.\end{equation}
Here $d$ is the trivial connection on the trivial $\H^2$-bundle considered as a $\C^4$-bundle, $$A_\circ=A+*\eta$$
 where $A$ is the Hopf field of the conformal immersion and $q$ is the Lagrange-multiplier of the constrained Willmore Euler-Lagrange equation (which is not unique for isothermic surfaces). 
He showed that the flatness of an associated $\C^*$-family $\nabla^\mu$ of $\SL(4, \C)$-connections defined on the trivial bundle $V$, considered as a $\C^4$-bundle, is equivalent to $f$ being constrained Willmore. The (holonomy) spectral curve is then given by the Riemann surface parametrizing the eigenlines of the holonomy of $\nabla^\mu.$ Bohle \cite{Bohle} showed that the (holonomy) spectral curve is always of finite genus and that  both approaches to the spectral curve coincide. To be more precise, Bohle showed that $\nabla^\mu$-parallel sections with monodromy are the unique prolongations of the holomorphic sections with monodromy of the quaternionic holomorphic line bundle $(V/L, D)$ to $V.$ The genus $g$ of the associated spectral curve is called the spectral genus of the immersion $f$.

\begin{Rem}
In the case of $f$ mapping into the $3$-sphere $S^3\subset S^4$, the spectral curve $\Sigma$ admits an additional involution $\sigma$, see \cite[Lemma 1]{Hel3}. Another involution $\rho $ on $\sigma$ arises from the quaternionic construction, i.e., by an appropriate multiplication by $\jj.$ If the quotient $\Sigma/\sigma$ is biholomorphic to $\C P^1$, there are two cases to distinguish depending on whether the real involution $\rho \circ \sigma$ has fix points or not. In the first case the surface is of constant mean curvature in $\R^3,$ $S^3$ or $\mathcal H^3$ (with mean curvature $|H|>1$). If $\rho \circ \sigma$ has no fixed points then the corresponding immersion is of Babich-Bobenko type. We want to show the converse, i.e., that $\Sigma/\sigma \cong \C P^1$ for Babich-Bobenko tori. 
\end{Rem}

\subsection{The light cone model}

CMC surfaces in 3-dimensional space forms can also be described by associated families of flat $\SL(2,\C)$-connections $\nabla^\lambda,$ $\lambda \in \C_*$, on a rank $2$ bundle $\tilde V \rightarrow M$
\cite{HitchinHM,BabBob}. 
In the case of tori, these  families of flat connections can be described by (algebraic-geometric) spectral data
consisting of a (compact) hyper-elliptic curve $\tilde \Sigma $ (the spectral curve), two meromorphic differentials, and a holomorphic line bundle. 
In the case of Babich-Bobenko tori \cite{BabBob} $\tilde \Sigma$ is the spectral curve of a finite gap solution of the Cosh-Gordon equation and 
admits a real involution covering $\lambda\mapsto-\bar\lambda^{-1}$. Therefore $\tilde \Sigma$ hyperelliptic and of odd genus. 
In this alternate approach the light cone model as developed in \cite{BuPP, BuCa} is used.  Its relation to quaternionic holomorphic geometry can be found in \cite[$\S$5]{BuQu}, details of the computations  is also included in the thesis of Quintino \cite{Qui} and in \cite{Qui2}. We only recall the main constructions here. The Pl\"ucker estimate cannot be applied to this approach directly, since $\tilde \nabla^\lambda$ have singularities on $M$, corresponding to the intersection of the surface with the infinity boundary of $\mathcal H^3,$ see \cite{HH2}.

As in \cite{BuQu} we start with $\C^4$ equipped with a quaternionic structure, i.e., a complex anti-linear map
\[j\colon\C^4\to\C^4\]
with $j^2=-1 $ and identify $\C^4 \cong \H^2.$ Moreover, we choose a determinant $\text{det}\in\Lambda^4(\C^4)^*$ with 
\[j^*\text{det}=\overline{\text{det}}\]
and
\[\text{det}(e_1,e_2,je_1,je_2)=1\]
 for $\{e_1, e_2, e_3:= j e_1, e_4 := j e_2\}$ being the standard basis of $\C^4.$ The quaternionic structure induces 
a real structure on $\Lambda^2\C^4$ (also denoted by $j$ by abuse of notation) via
\[v\wedge w\mapsto j v\wedge jw,\]
and the determinant induces an inner product  $\langle.,.\rangle$ on $\Lambda^2\C^4$ by
\[\langle \alpha,\beta\rangle=\text{det}(\alpha\wedge \beta).\]
The $6$-dimensional real subspace $V$ is spannend by
\begin{equation}
\begin{split} v_1&= e_1\wedge e_3,\\ v_2&=  e_2\wedge e_4,\\ v_3&=  e_1\wedge e_2+e_3\wedge e_4, \\v_4&= ie_1\wedge e_2-ie_3\wedge e_4,\\v_5 &= e_1\wedge e_4+e_2\wedge e_3,\\ v_6&=  ie_1\wedge e_4-ie_2\wedge e_3.
\end{split}
\end{equation}
Restricted to $V$ the inner product $\langle.,.\rangle$ is of signature $(5,1)$.

For a general $(n+2)$-dimensional real vector space $V$ with inner product of signature $(n+1,1)$, the $n$-sphere can be naturally identified with projectivation 
$\P\mathcal L$ of the light cone 
\[\mathcal L=\{ v\in V\mid \langle v,v\rangle=0\}.\] Moreover, $\P \mathcal L $ is equipped with a natural conformal structure: For a lift $l$ of $\pi\colon \mathcal L\to \P\mathcal L$
the Riemannian metric $g_l$ is defined as
\begin{equation}\label{gsigma}g_l(X,Y):=\langle dl(X),dl(Y)\rangle.\end{equation}
The space of orientation preserving conformal transformations -- the Moebius group -- can be identified with
\[SO(n+1,1)^+:=\{g\in SO(n+1,1)\mid \langle g(v),v\rangle <0\text{ if }\langle v,v\rangle <0\}.\]

For $V$ being the real subspace of $\Lambda^2\C^4$,  a real non-zero lightlike vector of $V$ is given by a complex 2-plane in $\C^4$ (nullity) invariant under $j$ (reality), i.e., it gives rise to a quaternionic line in $(\C^4,j).$ This identifies the 4-sphere
with the quaternionic projective line $\H P^1$, and relates the quaternionic holomorphic geometry to the lightcone model, see \cite[$\S$4]{BuQu}. \\

Constant curvature subgeometries of the Moebius geometry $(\P\mathcal L, SO(5,1)^+)$ are specified by a choice $v_\infty\in V\setminus \{0\}.$ Such a choice provides a natural  lift $l$ of $\P \mathcal L$
 onto the subset \begin{equation}\label{subgeom}S_\infty:=\{[x]\in \P\mathcal L\mid \langle x, v_\infty\rangle =-1\},\end{equation}
and the induced Riemannian metric $g_l$ defined in  \eqref{gsigma} is of constant sectional curvature $-\langle v_\infty,v_\infty\rangle.$ The corresponding group of orientation preserving isometries of the subgeometry is then given by
\[SO(5,1)^+_\infty=\{g\in SO(5,1)^+\mid g(v_\infty)=v_\infty\},\]
and is  isomorphic to $SO(5)$ if $\langle v_\infty,v_\infty\rangle<0$ and isomorphic to $SO(4,1)$ if $\langle v_\infty,v_\infty\rangle>0.$\\

To define the associated family of connections, we need the mean curvature sphere congruence $S$ for the immersion $f: M \rightarrow S^4$. This  is a map $S$ from $M$ into the space of  oriented $2$-spheres in $S^4,$ such that at every $p \in M$ the corresponding $2$-sphere $S(p)$, touches the immersion at $f(p)$ 
and has the same oriented tangent plane, and the same mean curvature.
An oriented 2-sphere $S\subset \P\mathcal L$ is determined by an oriented (real) 4-dimensional vector space $V_S\subset V$ of signature $(3,1)$ via $S= \P V_S\cap\P\mathcal L.$
This space is uniquely determined by its orthogonal complement, $V_N:=V_S^\perp,$ which is a oriented real 2-plane with positive definite inner product, and therefore admits a unique compatible
complex structure \[J_N\colon V_N\to V_N,\quad J_N^2=-\text{id}.\]

A conformal immersion $f\colon M\to\P\mathcal L$ is naturally equipped with the real rank 4 subbundle
\[V_S\longrightarrow M\]
of the trivial rank 6 bundle $V$,
with complexification locally given by
\[V_S \otimes \C = \Span\{\hat f,\hat f_z,\hat f_{\bar z}, \hat f_{z,\bar z}\}\]
for some local lift $\hat f\colon U\subset M\to\mathcal L$ (and where $f_z=\tfrac{\partial f}{\partial z}$ etc.), see \cite{BuPP,BuCa,BuQu}.
The bundle $V_S$ has induced signature $(3,1)$ and a natural orientation. Therefore $V_S$ gives rise to a sphere congruence, i.e., to a smooth map into the space of oriented 2-spheres in $S^4.$
It can be computed (see \cite{BuPP, BuCa}) that the sphere congruence $V_S$ is the mean curvature sphere congruence, i.e., $(V_S)_p$ is the unique oriented 2-sphere in $S^4$ which touches (with orientation) the surface at $f(p)$ to second order.
Analogous to the classical case of surface geometry in euclidean 3-space $\R^3$, we consider the induced splitting of the trivial connection $d$ with respect to
\[V=V_S\oplus V_N,\] 
where $V_N :=V_S^\perp$ given by
\[d=\mathcal D+\mathcal N\]
 into diagonal part $\mathcal D$ and off-diagonal part $\mathcal N.$ 
While  $\mathcal D$ is a connection, $\mathcal N$ is tensorial.\\

Another related vector bundle $Z$ is the bundle of skew-symmetric maps of $(V,\langle.,.\rangle)$ which map $\R\hat f$ to $\Span\{\hat f,\hat f_z,\hat f_{\bar z}\}$ and vice versa vanishing on other components. 

With these notations we list a few further important 
properties of the mean curvature sphere congruence:
\begin{itemize}
\item (see \cite{BuPP}) $f$ is isothermic if and only if there exists $\eta\in\Omega^1(M,Z)$ with
\[d\eta=d^{\mathcal D}\eta+[\mathcal N\wedge \eta]=0;\]
\item (see \cite{BuCa}) the Willmore energy of $f$ is given by
\[\mathcal W(f)=-\tfrac{1}{4}\int_M \text{tr}( *\mathcal N\wedge \mathcal N)\]
where $*dz=idz, *d\bar z=-id\bar z;$
\item (see \cite{BPP,BuPP,BuCa}) a surface is Willmore if and only if
\[d^{\mathcal D}*\mathcal N=0,\]
and constrained Willmore if and only if there exists a $q\in\Omega^1(M, Z)$
satisfying $d^{\mathcal D}q=0$
and
\[d^{\mathcal D}*\mathcal N=2[q\wedge *\mathcal N];\]
$q$ is called the {\em Lagrange multiplier} of $f$;
\item (see \cite{BuPP} or \cite[$\S$3.3]{BuQu}) a surface $f$ has parallel mean curvature vector $H$ in the constant sectional curvature subgeometry $S_\infty$  of $\P\mathcal L$ defined by $v_\infty$ in \eqref{subgeom} if and only if
\[\mathcal D v_\infty^\perp=0,\]
where $v_\infty^\perp$ is the projection to $V_N=V_S^\perp;$ in particular, $f$ is minimal in $S_\infty$ if and only
if $v_\infty^\perp=0;$
\item a surface with parallel mean curvature vector $H$ in $S_\infty$ is constrained Willmore with Lagrange multiplier
$q$ which is determined by
\[q^{1,0}v_\infty^\perp:=-\mathcal N^{1,0} (v_\infty^\perp)^+,\] where $ v_\infty^\perp=(v_\infty^\perp)^++(v_\infty^\perp)^-$
is with respect to the decomposition of the normal bundle $V_N\otimes\C=V_N^+\oplus V_N^-,$
see \cite[$\S$7.2.2]{Qui}.
\end{itemize}
In particular, the Lagrange multiplier $q$ of a constrained Willmore surface $f$ is unique if and only if $f$ is non-isothermic, as 
for two Lagrange multipliers $q_1,q_2$ the 1-form 
$$\eta=*q_1-*q_2\in\Omega^1(M,Z)$$ solves $d\eta=0.$

The following theorem reduces the constrained Willmore property of a given immersion $f$ to the flatness of an associated family of flat connections in the language of the light cone model. 
\begin{Pro}\cite{BuPP,BuCa}\label{mcsfam}
The surface $f\colon M\to\P\mathcal L$ is constrained Willmore with Lagrange multiplier $q\in\Omega^1(M, Z)$
if and only if
\[d^\lambda_q:=\mathcal D+\lambda^{-1}\mathcal N^{(1,0)}+\lambda\mathcal N^{(0,1)}+(\lambda^{-2}-1) q^{(1,0)}+(\lambda^2-1) q^{(0,1)}\]
is flat for all $\lambda\in\C^*,$ where $(1,0)$ and $(0,1)$ are the complex linear and complex anti-linear parts of a 1-form.
\end{Pro}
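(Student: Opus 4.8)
\emph{The plan} is to verify the zero-curvature condition directly by expanding the curvature of $d^\lambda_q$ as a Laurent polynomial in $\lambda$ and matching coefficients. Writing $d^\lambda_q=\mathcal D+A(\lambda)$ with $A(\lambda)=\lambda^{-1}\mathcal N^{(1,0)}+\lambda\mathcal N^{(0,1)}+(\lambda^{-2}-1)q^{(1,0)}+(\lambda^2-1)q^{(0,1)}$, its curvature is $R^{\mathcal D}+d^{\mathcal D}A(\lambda)+\tfrac12[A(\lambda)\wedge A(\lambda)]$, and flatness for all $\lambda$ is equivalent to the vanishing of each of the coefficients of $\lambda^{-2},\dots,\lambda^{2}$. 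First I would record the two facts that hold for \emph{every} immersion: since $d=\mathcal D+\mathcal N$ is the flat trivial connection, the part preserving the splitting $V=V_S\oplus V_N$ gives the Gauss equation $R^{\mathcal D}+\tfrac12[\mathcal N\wedge\mathcal N]=0$, while the off-diagonal part gives the Codazzi equation $d^{\mathcal D}\mathcal N=0$. Equivalently, $d^1_q=d$ is automatically flat, so the five coefficients sum to $R^d=0$ and only four are independent; the reality conditions $\overline{q^{(1,0)}}=q^{(0,1)}$ and $\overline{\mathcal N^{(1,0)}}=\mathcal N^{(0,1)}$ then make the $\lambda^{-k}$ and $\lambda^{k}$ equations conjugate, halving the work.

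Next I would exploit the bidegree decomposition: on a surface the wedge of two $(1,0)$-forms or two $(0,1)$-forms vanishes, so only $(1,0)\wedge(0,1)$ brackets survive in $\tfrac12[A(\lambda)\wedge A(\lambda)]$. A short computation gives the coefficient of $\lambda^{-2}$ as $d^{\mathcal D}q^{(1,0)}-[q^{(1,0)}\wedge q^{(0,1)}]$ and the coefficient of $\lambda^{2}$ as its conjugate. The crucial structural input is that $q$ takes values in $Z$, the abelian algebra of null rotations fixing the light line $\R\hat f$; hence $[q^{(1,0)}\wedge q^{(0,1)}]=0$, and the $\lambda^{\pm2}$-coefficients reduce to $d^{\mathcal D}q^{(1,0)}=0=d^{\mathcal D}q^{(0,1)}$, that is, to $d^{\mathcal D}q=0$. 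Using the Gauss equation, the $\lambda^{0}$-coefficient collapses to the same condition and so carries no new information.

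It then remains to read off the $\lambda^{\pm1}$-coefficients, namely $d^{\mathcal D}\mathcal N^{(1,0)}+[q^{(1,0)}\wedge\mathcal N^{(0,1)}]-[\mathcal N^{(1,0)}\wedge q^{(0,1)}]$ and its conjugate. Adding the two recovers $d^{\mathcal D}\mathcal N=0$, the Codazzi equation, which already holds; subtracting them and substituting $*\mathcal N=i\mathcal N^{(1,0)}-i\mathcal N^{(0,1)}$ and $*q=iq^{(1,0)}-iq^{(0,1)}$ turns the difference into precisely $d^{\mathcal D}*\mathcal N=2[q\wedge *\mathcal N]$. Thus, granting the always-true structure equations, vanishing of all coefficients is equivalent to the two constrained Willmore equations $d^{\mathcal D}q=0$ and $d^{\mathcal D}*\mathcal N=2[q\wedge *\mathcal N]$; the converse is the same computation run backwards. \emph{The main obstacle} is purely bookkeeping: keeping the signs and the symmetry $[\alpha\wedge\beta]=[\beta\wedge\alpha]$ for $Z$-valued $1$-forms straight, and, above all, justifying the two structural inputs — that the $V_S\oplus V_N$ splitting of $\mathfrak{so}(V)$ cleanly separates the Gauss part from the Codazzi part, and that $Z$ is abelian so that $[q^{(1,0)}\wedge q^{(0,1)}]$ drops out. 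Once these are in place the coefficient matching is mechanical.
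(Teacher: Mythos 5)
The paper itself does not prove Proposition \ref{mcsfam} --- it cites \cite{BuPP,BuCa} --- so your proposal is measured against the literature proof (Burstall--Quintino \cite{BuQu} and Quintino's thesis \cite{Qui}), which is indeed the Laurent-coefficient matching you outline. Most of your steps check out: flatness of $d=\mathcal D+\mathcal N$ splits along $V=V_S\oplus V_N$ into the Gauss equation $R^{\mathcal D}+\tfrac12[\mathcal N\wedge\mathcal N]=0$ (diagonal part) and the Codazzi equation $d^{\mathcal D}\mathcal N=0$ (off-diagonal part), since $R^{\mathcal D}$ and $[\mathcal N\wedge\mathcal N]$ preserve the splitting while $d^{\mathcal D}\mathcal N$ reverses it; the bundle $Z$ is identified with $\Lambda\wedge\Lambda^{(1)}$, where $\Lambda=\R\hat f$ and $\Lambda^{(1)}=\Span\{\hat f,\hat f_z,\hat f_{\bar z}\}$, and it is abelian precisely because $\langle\hat f,\hat f\rangle=0$ and $\langle\hat f,\Lambda^{(1)}\rangle=0$, so $[q^{(1,0)}\wedge q^{(0,1)}]=0$ as you claim; and your $\lambda^{\pm1}$ bookkeeping is correct --- the sum of the two coefficients is the (automatic) Codazzi equation and the difference, after inserting $*\mathcal N=i\mathcal N^{(1,0)}-i\mathcal N^{(0,1)}$, is exactly $d^{\mathcal D}*\mathcal N=2[q\wedge*\mathcal N]$.

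The genuine gap is the sentence ``the $\lambda^{\pm2}$-coefficients reduce to $d^{\mathcal D}q^{(1,0)}=0=d^{\mathcal D}q^{(0,1)}$, that is, to $d^{\mathcal D}q=0$.'' On a Riemann surface every $2$-form is of type $(1,1)$, so there is no type decomposition of $2$-forms separating $d^{\mathcal D}q^{(1,0)}$ from $d^{\mathcal D}q^{(0,1)}$: since $q^{(1,0)}=\tfrac12(q-i*q)$, the pair of equations $d^{\mathcal D}q^{(1,0)}=0$ and $d^{\mathcal D}q^{(0,1)}=0$ is equivalent to $d^{\mathcal D}q=0$ \emph{together with} $d^{\mathcal D}(*q)=0$, a priori strictly stronger than the single condition $d^{\mathcal D}q=0$ in the definition of a Lagrange multiplier. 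Your direction ``flat $\Rightarrow$ constrained Willmore'' is unaffected, but the direction ``constrained Willmore $\Rightarrow$ flat'' (the one the paper actually uses) needs an additional structural lemma, and this is where $Z$ enters a second time, beyond abelianness. Writing $q^{(1,0)}=\hat f\wedge(b_1\hat f_z+b_2\hat f_{\bar z})\,dz$ modulo $\Lambda\wedge\Lambda$, one checks that the components of $d^{\mathcal D}q$ along $\hat f_z\wedge\hat f_{\bar z}$ and along $\hat f\wedge\hat f_{z\bar z}$ are $b_1+\bar b_1$ and $\bar b_1-b_1$ respectively; hence $d^{\mathcal D}q=0$ first forces $b_1=0$, i.e.\ $q^{(1,0)}$ takes values in the line $\Lambda\wedge\C\hat f_{\bar z}$, and only then do $d^{\mathcal D}q^{(1,0)}$ and $d^{\mathcal D}q^{(0,1)}$ land in the two conjugate line subbundles $\Lambda\wedge\C\hat f_{\bar z}$ and $\Lambda\wedge\C\hat f_z$ of $Z\otimes\C$, so that the vanishing of their sum kills each separately. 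This is exactly the lemma on multipliers in \cite{BuQu,Qui}; without it your coefficient matching proves a different equivalence, namely that flatness of $d^\lambda_q$ for all $\lambda$ is equivalent to the Euler--Lagrange equation together with the \emph{two} conditions $d^{\mathcal D}q=0$ and $d^{\mathcal D}(*q)=0$.
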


\subsection{Compatibility of the quaternionic and the lightcone theory}\label{quatlc} 
The two approaches, the quaternionic and the lightcone one, towards the associated family of flat connections are in fact equivalent, as both associated families are gauge equivalent, when choosing suitable parameters. In order to provide a link between these families we need to relate the two different ways to obtain the mean curvature sphere congruence $S$.\\

Oriented $2$-spheres in quaternionic geometry are given by  complex structures $\tilde S$ of $V= M \times \H^2$. To be more precise a $2$-sphere is a map 
$$\tilde S \in \SL(4, \C) \quad \text{with}\quad \tilde S^2=-\text{Id}.$$
On the other hand, an oriented 2-sphere $S\subset \P\mathcal L$ is determined by an oriented 4-dimensional vector space $V_S\subset V$ of signature $(3,1)$ via $S =\P V_S\cap\P\mathcal L.$
Moreover, $V_N:=V_S^\perp$ is a oriented real 2-plane with positive definite inner product, and therefore admits a compatible
complex structure \[J_N\colon V_N\to V_N,\quad J_N^2=-\text{id}.\]
We therefore obtain a decomposition
\[\Lambda^2 \C ^4 = V\otimes\C=V_S\otimes \C\oplus V_N^+\oplus V_N^-\]
such that
\[V_N^\pm=\{v\in V_N\otimes\C\mid J_N v=\pm i v\}\]
are complex null lines that are complex conjugated to each other, i.e.,
\[jV_N^\pm=V_N^\mp.\]

In particular, $V_N^\pm$ gives rise to complex planes $W^\pm$ in $\C^4$ satisfying 
\[\C^4=W^+\oplus W^-.\]  
Hence, there exists a unique $\tilde S\in\SL(4,\C)$
with
\begin{equation}\label{quat2sphere}
\tilde S^2=-\text{id}\quad \tilde S_{\mid W^\pm}= \pm i\quad \tilde Sj=j\tilde S,\end{equation}
which is a $2$-sphere in $\H P^1$ in the quaternionic sense. 

Conversely, every quaternionic $2$-sphere $\tilde S$ determines its $\pm i$ eigenspaces 
$W_{\tilde S}^\pm$ which are interchanged via $j.$ They define complex null-lines $V^\pm_N$ satisfying $jV^\pm_N=V^\mp_N,$ and therefore
define a real oriented 2-plane of signature $(2,0).$ Its orthogonal complement in $V$ is a real oriented vector space $V_S$ of signature $(3,1)$, hence a 2-sphere in $\P\mathcal L.$

In the quaternionic description of $f \colon M \rightarrow S^4 \subset \H P^1$, we consider the quaternionic line bundle $L = f^*\mathcal T,$ the pull-back of the tautological bundle $\mathcal T$ of $\H P^1$,  which can be viewed as a $M$-family
of $j$-invariant complex 2-planes in $\C^4$ determined by $f\colon M\to\P\mathcal L.$ Its
  mean curvature sphere congruence
\[\tilde S_f\colon M\to \{\tilde S\in\SL(4,\C)\mid \tilde S \text{ satisfies } \eqref{quat2sphere}\}\]
is determined by the above  identifications.
We consider the bundle decomposition $$\C^4=W^+\oplus W^-$$ into the $\pm i$-eigenbundles of $\tilde S_f$,
and the decomposition of the trivial connection $d$ as
\[d=\mathcal D_{\tilde S}+\mathcal N_{\tilde S}\]
into ${\tilde S}_f$ commuting and anti-commuting parts, i.e., $\mathcal D_{\tilde S}{\tilde S}_f=0$ and $\mathcal N_{\tilde S}{\tilde S}_f=-{\tilde S}_f\mathcal N_{\tilde S}.$
Again $\mathcal D_{\tilde S}$ is a connection and $\mathcal N_{\tilde S}$ is tensorial. Moreover, $\mathcal D_{\tilde S}$ induces $\mathcal D$ on $\Lambda^2\C^4$, and $\mathcal N_{\tilde S}$ acts as $\mathcal N\in\Omega^1(M, \mathfrak{so}(\Lambda^2\C^4,\text{det}))$. Note that reality of $\mathcal N_{\tilde S}$ is equivalent to anti-commutation with $S_f$, For details, see \cite[$\S$ 4.5]{BuQu}. The Lagrange multiplier $q\in\Omega^1(M,Z)$ is then given by $\eta\in\Omega^1(M,\mathfrak{sl}(4,\C))$ satisfying
\[\text{image}(\eta)\subset L\subset \text{ker}( \eta) \quad d^{\mathcal D_{\tilde S}}\eta=0 \]
and the Euler-Lagrange equation of a CW surface with Lagrange multiplier $\eta$ is
\[d^{\mathcal D_{\tilde S}}*\mathcal N_{\tilde S}=2[\eta\wedge*\mathcal N_{\tilde S}].\]
Consequently, a surface is constrained Willmore in the 4-sphere if and only if
the connections 
\begin{equation}\label{dqS}d^\lambda_{\eta,{\tilde S}}:=\mathcal D_{\tilde S}+\lambda^{-1}\mathcal N_{\tilde S}^{1,0}+\lambda\mathcal N_{\tilde S}^{0,1}+(\lambda^{-2}-1) \eta^{1,0}+(\lambda^2-1) \eta^{0,1}\end{equation}
are flat for all $\lambda\in\C^*.$ Moreover, the induced family of flat connections on $\Lambda^2\C^4$ satisfies
\begin{equation}\label{la2d}
\Lambda^2 d^\lambda_{\eta,{\tilde S}}=d^\lambda_q\end{equation}
for $\eta$ corresponding to $q$ under the above identifications.

The following Proposition is proven in \cite[$\S$ 9]{Qui}, and is used below to determine the structure of the spectral curves of the Babich-Bobenko tori:
\begin{Pro}
The family of $\SL(4,\C)$-connections $\nabla^\mu$ as in \eqref{nabmu} is gauge equivalent to $d^\lambda_{\eta,{\tilde S}}$ for $\mu=\lambda^2.$
\end{Pro}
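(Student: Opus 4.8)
The plan is to prove the Proposition by a direct gauge computation in the common trivial $\C^4$-bundle, using the dictionary between the quaternionic mean curvature sphere $\tilde S_f$ and the lightcone sphere congruence $V_S$ established above. Both $\nabla^\mu$ and $d^\lambda_{\eta,\tilde S}$ are $\SL(4,\C)$-connections on $V=\C^4$; gauge equivalence under $\mu=\lambda^2$ means producing $g_\lambda\in\Gamma(\SL(4,\C))$, depending only on $\lambda$ and the mean curvature sphere data, with $g_\lambda\cdot d^\lambda_{\eta,\tilde S}\cdot g_\lambda^{-1}=\nabla^{\lambda^2}$. I would work throughout in the frame adapted to the eigenbundle splitting $\C^4=W^+\oplus W^-$ of $\tilde S_f$ from \eqref{quat2sphere}, in which $\mathcal D_{\tilde S}$ is block diagonal and $\mathcal N_{\tilde S}$, $\eta$ are block off diagonal.

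First I would make the dictionary between Bohle's $A_\circ=A+*\eta$ and the lightcone data explicit. Decomposing the off-diagonal part of $d$ into its Hopf and co-Hopf pieces relative to $\tilde S_f$, one identifies $\mathcal N_{\tilde S}^{1,0}$ and $\mathcal N_{\tilde S}^{0,1}$ with the $(1,0)$- and $(0,1)$-components of $A$ together with the co-Hopf contribution, and identifies the quaternionic Lagrange multiplier with $\eta$ (compatibly with \eqref{la2d}, which records the same identification after applying $\Lambda^2$). With this dictionary the coefficients $(\lambda^{\pm2}-1)$ multiplying $\eta^{1,0},\eta^{0,1}$ in $d^\lambda_{\eta,\tilde S}$ are seen to correspond, under $\mu=\lambda^2$, to the $*\eta$-part of the $(\mu-1)A_\circ^{1,0}$ and $(\mu^{-1}-1)A_\circ^{0,1}$ terms of $\nabla^\mu$. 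I would then write down $g_\lambda$ explicitly: on the eigenbundles $W^\pm$ it acts by fixed powers of $\lambda$ (normalized so that $\det g_\lambda\equiv1$), possibly composed with a $\lambda$-independent unipotent factor determined by the tautological line $L=f^*\mathcal T$. Because $g_\lambda$ is constant along $M$ it fixes $\mathcal D_{\tilde S}$, while it rescales the block off-diagonal terms by the powers of $\lambda$ needed to convert the $\lambda^{\mp1}$-weighting of $\mathcal N_{\tilde S}$ in $d^\lambda_{\eta,\tilde S}$ into the trivial connection $d=\mathcal D_{\tilde S}+\mathcal N_{\tilde S}$ together with the $(\lambda^{\pm2}-1)A_\circ$-terms of $\nabla^{\lambda^2}$. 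A term-by-term comparison then yields the claimed identity.

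The main obstacle is that the two theories package the Hopf field differently: the lightcone family weights $\mathcal N_{\tilde S}^{1,0}$ and $\mathcal N_{\tilde S}^{0,1}$ symmetrically by $\lambda^{\mp1}$, whereas Bohle's $A_\circ$ singles out one chiral half of the off-diagonal and thereby re-weights the Hopf piece relative to the co-Hopf piece. Consequently $g_\lambda$ cannot be diagonal in the eigenbundles of $\tilde S_f$ alone; it must also respect the flag given by $L$, and it is exactly this chiral re-weighting of the Hopf field that forces the doubling $\mu=\lambda^2$ and the specific powers of $\lambda$ in $g_\lambda$. Identifying this gauge, and checking that it is single-valued on $T^2$ and has determinant one, is the heart of the argument; once this is in place the verification is the routine term-by-term matching carried out in \cite[$\S$9]{Qui}.
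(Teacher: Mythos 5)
The paper offers no argument of its own for this Proposition: its entire ``proof'' is the citation to \cite[$\S$ 9]{Qui} (later invoked as \cite[Lemma 9.14]{Qui} together with \cite[Equation (2.11)]{Bohle}), so your proposal has to be measured against that computation. Your overall mechanism is the right one and agrees with the cited source: work in the eigenbundle splitting $\C^4=W^+\oplus W^-$ of $\tilde S_f$ from \eqref{quat2sphere}, gauge by fixed powers of $\lambda$ on $W^\pm$, and observe that this conjugation rescales the two off-diagonal blocks by $\lambda^{\pm1}$, turning $\lambda^{-1}\mathcal N_{\tilde S}^{1,0}+\lambda\mathcal N_{\tilde S}^{0,1}$ into the full $\mathcal N_{\tilde S}$ (absorbed into $d=\mathcal D_{\tilde S}+\mathcal N_{\tilde S}$) plus one Hopf-field half reweighted by $\lambda^{\mp2}$ --- this chiral reweighting is exactly what forces $\mu=\lambda^2$. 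But since you explicitly outsource ``the heart of the argument'' back to \cite[$\S$ 9]{Qui}, what you have written is a sketch of that reference, not an independent proof; as a blind attempt it is incomplete at precisely the step the Proposition asserts.

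More importantly, your third paragraph contains a genuine error: the claim that $g_\lambda$ ``cannot be diagonal in the eigenbundles of $\tilde S_f$ alone'' and must carry a unipotent factor respecting the flag of $L$. This is false, and it contradicts your own first paragraph. The diagonal gauge $g_\lambda=\lambda^{-1/2}\pi_{W^+}+\lambda^{1/2}\pi_{W^-}$ (signs fixed by the type conventions; either choice of $\sqrt\lambda$ gives $\pm g_\lambda$, which act identically by conjugation) has determinant $1$ and does the whole job. What you are missing is the normalization of the Lagrange multiplier: $\eta$ can be, and in the cited construction is, taken to commute with $\tilde S_f$ and to satisfy the type condition $*\eta=\tilde S\eta$, so it is block diagonal in $W^\pm$, hence \emph{invariant} under conjugation by $g_\lambda$; its coefficients $(\lambda^{\mp2}-1)$ in \eqref{dqS} then match the $(\mu^{\pm1}-1)$-weighted $*\eta$-part of $A_\circ$ in \eqref{nabmu} verbatim under $\mu=\lambda^2$, with no correction along $L$ needed. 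Without this normalization, a general $\eta$ with $\mathrm{image}(\eta)\subset L\subset\ker(\eta)$ has an $\tilde S$-anticommuting part whose blocks would pick up odd powers $\lambda^{\pm1},\lambda^{\pm3}$ under your gauge, and no unipotent factor repairs that --- so your proposed fix would derail the term-by-term matching rather than complete it. Two smaller inaccuracies: $g_\lambda$ is \emph{not} constant along $M$ (the bundles $W^\pm$ vary with $p$); it is a function of $\tilde S_f$ and therefore $\mathcal D_{\tilde S}$-parallel, which is the property actually needed to fix $\mathcal D_{\tilde S}$. And single-valuedness on $T^2$ is automatic for the same reason; the only real multivaluedness is in $\lambda$, which is harmless and consistent with the $2{:}1$ parameter correspondence $\mu=\lambda^2$.
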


\subsection{The structure of the spectral curve of a Babich-Bobenko torus}\label{babostructure}
The aim of this section is
 to show
 the holonomy spectral curve of a Babich-Bobenko torus defined by the rank 4 family $\nabla^\mu$  has the same properties as the Cosh-Gordon spectral curve by taking the Lagrange multiplier $\eta$ corresponding to $q_\infty$ as defined in \cite[page 130]{Qui}. Note that we consider the immersions maps into $S^3 \subset S^4 \cong \H P^1$ (to make the relation to quaternionic holomorphic surface geometry transparent). 
We first study the structure of the spectral curve
 for the case of $H=0$, which is equivalent to the vanishing of the Langrange multiplier $\eta  \mathop{\widehat{=}}  q=0$. The case  $0<|H|<1$ is morally the same, though the details are slightly different, see  Section \ref{appendix}. The application of the Pl\"ucker estimate in Section \ref{sec:pluecker} below works totally analogous in both cases.

\begin{Pro}\label{pro:babo}
For a Babich-Bobenko torus $f\colon M\longrightarrow S^3$ with $H=0$, the associated constrained Willmore family of flat connections $\nabla^\mu$  is gauge equivalent
 to a  $\C_*$-family of flat  $\SL(4,\C)$-connections $\tilde\nabla^{\lambda}$ 
  of the form
\[\tilde\nabla^\lambda=d+\begin{pmatrix}\omega(\lambda)&0\\0&\overline{\omega(\bar\lambda^{-1})}\end{pmatrix}\]
with $\mu=\lambda^2$  through a $\lambda$-dependent family of complex gauge transformations, where 
\begin{equation}\label{lam11}\omega(\lambda)=\lambda^{-1}\omega_{-1}+\omega_0+\lambda\omega_1\in\Omega^1(M,\mathfrak{sl}(2,\C))\end{equation} with
$*\omega_{\pm1}=\pm i\omega_{\pm 1}.$

Moreover, 
\[d+\omega(-\lambda) \;\;\; \text{ and } \;\;\; d+\overline{\omega(\bar\lambda^{-1})}\]
are gauge equivalent for all $\lambda\in\C^*$, and the monodromies  of $d+\omega(-\lambda)$ along non-trivial elements of the first fundamental group have neither unimodular nor real eigenvalues  for generic $\lambda\in S^1.$
\end{Pro}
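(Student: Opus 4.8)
The plan is to manufacture the block-diagonal family from the associated family of the underlying minimal surface in $\mathcal H^3$ and then to extract all the reality relations from the two involutions of the spectral curve. First I would use that a Babich-Bobenko torus with $H=0$ is, on its intersection with $\mathcal H^3$, a minimal surface and therefore carries the classical associated family of flat $\SL(2,\C)$-connections $d+\omega(\lambda)$ on a rank $2$ bundle $\tilde V$, governed by the Cosh-Gordon equation; the three-term shape \eqref{lam11} with $*\omega_{\pm1}=\pm i\omega_{\pm1}$ is exactly the splitting into the $(1,0)$- and $(0,1)$-Hopf parts (the $\lambda^{\pm1}$ coefficients) together with the middle connection term $\omega_0$. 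Because $H=0$ the Lagrange multiplier corresponding to $q=0$ vanishes, so $d^\lambda_{\eta,\tilde S}$ contains only the $\mathcal D_{\tilde S}$ and $\mathcal N_{\tilde S}$ terms; the quaternionic bundle $\C^4\cong\H^2$ then splits, after a $\lambda$-dependent complex gauge, as $\tilde V\oplus j\tilde V$ in a way compatible with $d^\lambda_{\eta,\tilde S}$, and hence with $\nabla^\mu$ for $\mu=\lambda^2$ by the compatibility Proposition and \eqref{la2d}. Since $j$ is complex anti-linear and interchanges the two summands, the lower block is forced to be the $j$-conjugate of the upper one, and tracking the quaternionic reality (which covers $\lambda\mapsto\bar\lambda^{-1}$) through $j$ turns conjugation-by-$j$ into ordinary complex conjugation together with the substitution $\lambda\mapsto\bar\lambda^{-1}$, producing exactly $d+\overline{\omega(\bar\lambda^{-1})}$.

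For the gauge equivalence $d+\omega(-\lambda)\sim d+\overline{\omega(\bar\lambda^{-1})}$ I would read off the reality structure of the Cosh-Gordon family. In this picture $\sigma$ covers $\lambda\mapsto-\lambda$ (the deck transformation of $\mu=\lambda^2$) and the quaternionic involution $\rho$ covers $\lambda\mapsto\bar\lambda^{-1}$, so their composite $\rho\circ\sigma$ covers $\lambda\mapsto-\bar\lambda^{-1}$; applying this substitution to $\omega(-\lambda)$ replaces $-\lambda$ by $\bar\lambda^{-1}$ and conjugates, giving precisely $\overline{\omega(\bar\lambda^{-1})}$. Thus the asserted gauge equivalence is the connection-level incarnation of the real involution covering $\lambda\mapsto-\bar\lambda^{-1}$ that the Cosh-Gordon spectral curve is known to admit, and I would build the intertwining gauge transformation from the (finite-gap) reality condition of the Lax pair. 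Equivalently, the two blocks are the two sheets $\pm\lambda$ of the double cover $\lambda\mapsto\lambda^2=\mu$, interchanged by $\sigma$ and conjugated by $\rho$.

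The main obstacle is the final claim, that the monodromy of $d+\omega(-\lambda)$ has, for generic $\lambda\in S^1$, neither unimodular nor real eigenvalues; I would argue by contradiction using analyticity in $\lambda$. The monodromy is holomorphic in $\lambda\in\C^*$, so its eigenvalues $\{\nu(\lambda),\nu(\lambda)^{-1}\}$ are real-analytic on $S^1$ away from a discrete branch set; if $|\nu|\equiv 1$ or $\nu\in\R$ held on a subarc of positive length it would, by analytic continuation, hold on all of $S^1$. Unimodularity on all of $S^1$ forces the holonomy to be unitarizable for every $\lambda\in S^1$, and real eigenvalues force a real ($\SL(2,\R)$-type) reduction; either is a reality structure whose fixed locus meets $S^1$, and by the dichotomy recorded in the Remark such a structure characterises a CMC torus in the round $S^3$, in $\R^3$, or in $\mathcal H^3$ with $|H|>1$ — in particular it endows the spectral curve with a fixed locus of the real involution $\rho\circ\sigma$. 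But for a Babich-Bobenko torus $\rho\circ\sigma$ covers $\lambda\mapsto-\bar\lambda^{-1}$, which is fixed-point-free on $\C^*$ (the equation $\lambda=-\bar\lambda^{-1}$ gives $|\lambda|^2=-1$); this is the contradiction. Since the exceptional set of a nonconstant real-analytic function on $S^1$ is discrete, the eigenvalues then avoid both the unit circle and the real axis for all but finitely many $\lambda\in S^1$. The delicate points I expect to fight are pinning down the precise action of $\sigma$ and $\rho$ on the eigenvalue function $\nu$, so that ``$|\nu|\equiv1$ (resp. $\nu\in\R$) on an arc'' translates cleanly into an honest fixed point of $\rho\circ\sigma$, and confirming that the relevant unitarizing and real reality structures are exactly the ones characterising CMC surfaces in the round $S^3$, $\R^3$ or $\mathcal H^3$ as in that dichotomy.
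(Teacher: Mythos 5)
Your first step follows the paper's actual route: with $\eta=0$ the family $d^\lambda_{0,\tilde S}$ preserves a $j$-stable splitting $\C^4=W\oplus jW$, which forces the block form with conjugate lower block. (Note, though, that you need \emph{two} parallel $2$-vectors for this: the vector $v$ defining $S^3\subset S^4$ as well as $v_\infty$ coming from minimality; you only invoke minimality, and the mechanism --- $v\pm iv_\infty$ normalizing to $e_1\wedge e_2$ and $e_3\wedge e_4$, whose parallelity is equivalent to block-diagonality --- is skipped.) The first genuine gap is the equivalence $d+\omega(-\lambda)\sim d+\overline{\omega(\bar\lambda^{-1})}$. Substituting $\lambda\mapsto-\bar\lambda^{-1}$ into the family is bookkeeping, not a proof, and your fallback --- importing the reality condition of the Cosh-Gordon finite-gap Lax pair from \cite{BabBob} --- is both circular and unavailable: the Cosh-Gordon structure of the spectral curve (Corollary \ref{coshgordon}, in particular the involution covering $\lambda\mapsto-\bar\lambda^{-1}$) is \emph{deduced from} this proposition, and the classical associated family of the minimal pieces is singular along the intersection of $f$ with the infinity boundary of $\mathcal H^3$ (as Section \ref{isothermic} notes), so it cannot be quoted globally on the compact torus. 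What actually has to be shown is that the gauge $H$ relating $\tilde\nabla^{\lambda}$ and $\tilde\nabla^{-\lambda}$ (both being gauge equivalent to $\nabla^{\mu=\lambda^2}$) is \emph{off}-block-diagonal; a block-preserving gauge would only yield $d+\omega(-\lambda)\sim d+\omega(\lambda)$. The paper computes $H$ explicitly from $H(s_\pm)=\pm i s_\pm$ on the eigenbundles of the mean curvature sphere congruence, finds it anti-diagonal with blocks $\bar P_q^{-1}KP_q$, and then still has to verify that the resulting $\SL(2,\C)$-valued gauge is globally well defined under the pointwise ambiguity $P_q\mapsto\alpha\begin{pmatrix}a&b\\ \bar b&\bar a\end{pmatrix}$ (the $\alpha^2$ computation). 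None of this appears in your proposal.

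The final claim is where your argument actually fails. Your contradiction conflates two different involutions: unimodularity of the eigenvalues on all of $S^1$ would, at best, produce a real structure covering $\lambda\mapsto\bar\lambda^{-1}$, whereas the involution $\rho\circ\sigma$ of a Babich-Bobenko spectral curve covers $\lambda\mapsto-\bar\lambda^{-1}$ and, for $|\lambda_0|=1$, maps the fibre over $\lambda_0$ to the fibre over $-\lambda_0$. The relation $(\rho\circ\sigma)^*\nu=\bar\nu^{-1}$ therefore translates ``$|\nu|=1$ over $\lambda_0$'' into a coincidence of eigenvalues over $\lambda_0$ and $-\lambda_0$, \emph{not} into a fixed point of $\rho\circ\sigma$; and since a spectral curve may carry several real structures, exhibiting a second involution covering $\lambda\mapsto\bar\lambda^{-1}$ contradicts nothing about the fixed-point-freeness of the given one. (Your covering assignments are also off relative to the paper's conventions, where $\sigma$ covers the identity in $\lambda$.) Moreover, the reduction steps are unjustified: unimodular eigenvalues of the commuting monodromies do not imply unitarizability, and real eigenvalues do not imply an $\SL(2,\R)$-reduction, because of possible parabolic (non-semisimple) holonomy. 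The paper's proof of this part is entirely different and short: if the eigenvalues of $d+\omega(-\lambda)$ were unimodular or real for generic $\lambda\in S^1$, then \cite[Proposition 3.2]{Bohle} forces all eigenvalues of the holonomy of $\nabla^\mu$ to equal $1$ identically for all $\mu\in\C^*$, which is excluded by \cite[Theorem 5.1]{Bohle}. To repair your route you would have to either reproduce such a holomorphicity argument or find a translation of the eigenvalue conditions that honestly produces fixed points; as written, the step does not go through.
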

\begin{proof}
Let $f\colon M\to S^3\subset\P\mathcal L$ be a Babich-Bobenko torus with mean curvature $H = 0.$ Then the family  of connections
 \[\tilde\nabla^\lambda=d^\lambda_{0,S}\]
as defined in \eqref{dqS} with Lagrange multiplier $\eta=0$ is flat for all $\lambda \in \C_*.$

Because the Babich-Bobenko surface is minimal in the intersection with the hyperbolic space $S_\infty$, the parallel vector 
$v_\infty$ is space-like and is contained in the mean curvature sphere bundle $V$ for all $p\in M.$ 
In particular, we have $\mathcal N(v_{\infty})=0$. Hence, by Proposition \ref{mcsfam} and \eqref{la2d} together with $q=0$
 $v_\infty$ is parallel with respect to $\Lambda^2 \tilde\nabla^\lambda$ for all $\lambda\in\C^*$. 
Recall that the 3-sphere $S^3\subset S^4$ is determined by
a space-like vector $v$  
via
\[S^3=\{[x]\in\P\mathcal L\mid \langle x,v\rangle=0\}.\]
Thus, $v$ is contained in $V_N$ for all $p\in M$ and hence $v$ is also parallel with respect to $\Lambda^2 \tilde\nabla^\lambda$ for all $\lambda\in\C^*$. Note also that $v$ and $v_\infty$ are perpendicular.
There exists a conformal transformation of $S^4$ given by a real element of $\SO(\Lambda^2\C^4,\det)$ which transforms
the (real and space-like) 2-vectors $v$ and $v_\infty$ (which are perpendicular to each other as they define perpendicular 3-spheres in the 4-sphere) as follows.
\[v \mapsto  \tilde v:=e_1\wedge e_2+e_3\wedge e_4, \quad v_\infty\mapsto \tilde v_\infty:=ie_1\wedge e_2-ie_3\wedge e_4.\]
Hence, we can assume without loss of generality that $v=\tilde v$ and $v_\infty=\tilde v_\infty$ are parallel for all $\lambda\in\C^*.$
For a connection
$d+A$ with $A\in\Omega^1(M,\mathfrak{sl}(4,\C))$  the 2-vectors $v,w\in\Gamma(M,\Lambda^2\C^4)$ are parallel  if and only if it  $e_1\wedge e_2$ and $e_3\wedge e_4$ are parallel. This is equivalent to
$A$ being of the form
\[A=\begin{pmatrix}A_1&0\\0&A_2\end{pmatrix}\]
for $A_1,A_2\in\Omega^1(M,\mathfrak{sl}(2,\C))$ as can be seen as follows:
for $A=(a_{i,j})$
we get
\[ A (e_1\wedge e_2)= (A e_1)\wedge e_2+e_1\wedge (A e_2)=(a_{3,1} e_3+a_{4,1} e_4)\wedge e_2+e_1\wedge (a_{3,2} e_3+a_{4,2} e_4) \]
which vanishes if and only if \[a_{3,1}=a_{3,2}=a_{4,1}=a_{4,2}=0,\] and similarly for $A( e_3\wedge e_4).$
Moreover, if $A$ commutes with $j$ or equivalently $\Lambda^2(d+A)$ is real, then $A_2=\overline{A_1}:$
in fact, 
\[ j (d+A)(e_1)=jA_1(e_1)=j(a_{1,1} e_1+a_{2,1} e_2)=\overline{a_{1,1} }j e_1+\overline{a_{2,1}} j e_2=\overline{a_{1,1}} e_3+\overline{a_{2,1}} e_4\]
and \[(d+A)(je_1)= a_{3,3}e_3+a_{4,3}e_3,\]
and similarly for $e_2.$

Hence, with $q\widehat =\eta=0$ we see that $\tilde\nabla^\lambda$ has the form
\[\tilde\nabla^\lambda=d+\begin{pmatrix}\omega(\lambda)&0\\0&\overline{\omega(\bar\lambda^{-1})}\end{pmatrix},\]
where  $\omega(\lambda)$ is as stated in \eqref{lam11}. By
\cite[Lemma 9.14]{Qui} and \cite[Equation (2.11)]{Bohle}, $\tilde\nabla^\lambda$ is gauge equivalent to $\nabla^\mu$ as defined in \eqref{nabmu} (with $\mu=\lambda^2$). 

If the monodromies  of $d+\omega(-\lambda)$ along non-trivial elements $\gamma_i$ of $\pi_1(M)$ would have either unimodular or real eigenvalues for generic $\lambda\in S^1$
then \cite[Proposition 3.2]{Bohle} shows that the eigenvalues of $\nabla^\mu$ must be all equal to 1 for all $\mu \in \C_*$ and therefore this case can be excluded
  by \cite[Theorem 5.1]{Bohle}. 

It remains to prove that
$d+\omega(-\lambda)$ and $d+\overline{\omega(\bar\lambda^{-1})}$ are gauge equivalent for all $\lambda\in\C^*.$ 
We make use of the fact that
$\tilde\nabla^\lambda$ and $\tilde\nabla^{-\lambda}$ are gauge equivalent (as both are gauge equivalent to
$\nabla^{\mu = \lambda^2}$), and want to determine the gauge as explicit as possible.
Consider first the case that at some point $p\in M$
\[w=(e_1\wedge e_3-e_2\wedge e_4)\]
is (twice) the  oriented normal of $f$ at $p.$ Then, 
$S^+_p$ is the 2-plane determined by
\[w+ i \tilde v_\infty=(e_1-e_4)\wedge (e_3-e_2)\] and
$S^-_p$ is the 2-plane determined by
\[w-i \tilde v_\infty=(e_1+e_4)\wedge (e_2+e_3),\]
i.e.,
\[S^+_p=\text{span}(e_1-e_4,e_3-e_2)\]
and
\[S^-_p=\text{span}(e_1+e_4,e_2+e_3).\]
By \cite[$\S 3.2$ ]{BuQu} or \cite[Lemma 9.14]{Qui}, 
\[\tilde\nabla^{-\lambda}=\tilde\nabla^{\lambda}.H,\]
where $H\colon \C^4\to\C^4$ is determined  by
\[H(s_\pm)=\pm i s_\pm\quad\text{ for } \quad  s_\pm\in S^\pm.\]
Using the standard basis of $\C^4,$ $H_p$ is given by
\[H_p=\begin{pmatrix} 0&0&0&-i\\0&0&-i&0\\0&-i&0&0\\-i&0&0&0\end{pmatrix}.\]
Now, let (twice) the normal of $f$ at the point $q\in M$ be arbitrary, i.e. $N_q$ is in the real part of $\Lambda^2\C^4$
perpendicular to $\tilde v$ and $\tilde v_\infty$ and of length 2. There is a conformal transformation $\Psi_q$ of $S^4$
which fixes the 3-sphere and the sphere at infinity, and maps $N_p$ to $w.$ It must be (considered as a $\SL(4,\C)$-matrix commuting with $j$) of the form
\[\Psi_q=\begin{pmatrix} P_q &0\\0& \bar P_q\end{pmatrix}\]
where $P_q$ is a 2 by 2 matrix of unimodular determinant. Denote
\[K=\begin{pmatrix} 0&-i\\-i&0\end{pmatrix}.\]
Then,
\[H_q=\begin{pmatrix}P^{-1}_q&0\\0&\bar P^{-1}_q\end{pmatrix} \begin{pmatrix} 0 &K\\K& 0\end{pmatrix}\begin{pmatrix} P_q &0\\0& \bar P_q\end{pmatrix}= \begin{pmatrix} 0 &P^{-1}_qK\bar P_q\\ \bar P^{-1}_qKP_q& 0\end{pmatrix}.\]
Because the space of $\SL(4,\C)$ matrices commuting with $j$ and fixing 
\[\tilde v,\tilde v_\infty \quad \text{and} \quad e_1\wedge e_3-e_2\wedge e_4\]
is given by \[\begin{pmatrix}\alpha a&\alpha b&0&0\\\alpha \bar b&\alpha\bar a&0&0\\0&0&\alpha^{-1} \bar a&\alpha^{-1} \bar b\\0&0&\alpha^{-1}  b&\alpha^{-1} a\end{pmatrix},\]
where
where $a,b\in\C$, $\alpha\in S^1$ with
\[a\bar a-b\bar b=1,\]
 $P_q$ is unique up to
\[P_q\longmapsto  \alpha\begin{pmatrix}a&b\\\bar b&\bar a\end{pmatrix}.\]
Note that
\[\bar \alpha^{-1}\begin{pmatrix}\bar a&-\bar b\\-b&a\end{pmatrix}K\alpha\begin{pmatrix}a&b\\\bar b&\bar a\end{pmatrix}=\alpha^2K.\]
As $P$ can be locally chosen to be smooth on $M$
we find a well-defined global gauge transformation 
\[g\colon M\to\GL(2,\C)\] with unimodular determinant
which is locally given by
\[g_q=\bar P^{-1}_qKP_q\]
and satisfies
\[(d+\omega(\lambda)).g=d+\overline{\omega(\bar\lambda^{-1})}.\]
Moreover, due to the quadratic factor $\alpha^2,$ one can deduce that $g$ can actually be chosen to be $\SL(2,\C)$-valued.
\end{proof}

As an immediate corollary the spectral curve $\Sigma$ has the same properties as a Cosh-Gordon spectral curve. 
\begin{Cor}\label{coshgordon}
The spectral curve $\Sigma$ of a Babich-Bobenko torus (with $H=0$) considered as a Willmore torus, i.e, as the Riemann surface parametrizing the eigenlines of $\nabla^\mu,$ is given by a double covering of $ \lambda: \Sigma \longrightarrow \C P^1$ with $\lambda^2 =\mu$ satisfying:\begin{itemize}
\item $\lambda$ is branched over $\lambda = 0$ and $\lambda = \infty$
\item there exist two holomorphic functions -- the monodromy maps-- \[\nu_1,\nu_2\colon\Sigma\setminus\{0,\infty\}\longrightarrow\C^*\] 
such that the hyper-elliptic involution $\sigma$ satisfies 
\[\sigma^*\nu_i=\frac{1}{\nu_i},\quad \text{for }i = 1,2\]

\item $\Sigma$ has a anti-holomorphic involution $\rho$ covering $\lambda\mapsto-\bar\lambda^{-1}$ with \[ \rho^*\nu_i=\bar\nu_i, \quad \text{for }i = 1,2\]
\item $\mu : \Sigma \longrightarrow \C P^1$ is a  four-fold covering, i.e., for generic $ \mu \in \C_*$ the connection $\nabla^\mu$ has $4$ distinct eigenvalues along the generators $\gamma_k$ $k=1,2$ of $\pi_1(T^2)$ given by the four elements of the set
\[\{\nu_k(\xi) \mid  \quad \xi\in \Sigma : \mu=(\lambda(\xi))^2\}.\]
\end{itemize}
\end{Cor}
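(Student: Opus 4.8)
The plan is to read off all four properties from the block-diagonal normal form of Proposition \ref{pro:babo}, identifying the (rank $4$) holonomy spectral curve of $\nabla^\mu$ with the (rank $2$) spectral curve of the $\SL(2,\C)$-family $d+\omega(\lambda)$. Since $\nabla^\mu$ is gauge equivalent to $\tilde\nabla^\lambda=d+\mathrm{diag}\bigl(\omega(\lambda),\overline{\omega(\bar\lambda^{-1})}\bigr)$ with $\mu=\lambda^2$, the holonomy of $\nabla^\mu$ preserves the splitting $\C^4=W^+\oplus W^-$, so every eigenline of $\nabla^\mu$ is an eigenline either of $d+\omega(\lambda)$ or of $d+\overline{\omega(\bar\lambda^{-1})}$. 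By the last statement of Proposition \ref{pro:babo} the latter is gauge equivalent to $d+\omega(-\lambda)$; hence over a point $\mu=\lambda^2$ the four eigenlines of $\nabla^\mu$ are exactly the two eigenlines of $d+\omega(\lambda)$ together with the two of $d+\omega(-\lambda)$. This identifies $\Sigma$ with the spectral curve of the $\SL(2,\C)$-family $d+\omega(\lambda)$, on which $\lambda\colon\Sigma\to\CP^1$ is the natural double covering and $\mu=\lambda^2$. The four-fold covering statement is then immediate: for generic $\mu$ the four monodromy eigenvalues $\nu_i(\lambda),\nu_i(\lambda)^{-1},\nu_i(-\lambda),\nu_i(-\lambda)^{-1}$ attached to these eigenlines are pairwise distinct, the genericity following from the non-degeneracy of $\nabla^\mu$ already used in the proof of Proposition \ref{pro:babo} via \cite[Proposition 3.2, Theorem 5.1]{Bohle}.

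The hyperelliptic involution is the deck transformation $\sigma$ of the double cover $\lambda\colon\Sigma\to\CP^1$; it interchanges the two eigenlines of $d+\omega(\lambda)$ lying over a fixed $\lambda$. As the monodromy of $d+\omega(\lambda)$ lies in $\SL(2,\C)$, its two eigenvalues are reciprocal, so $\sigma$ sends $\nu_i$ to $\nu_i^{-1}$, i.e. $\sigma^*\nu_i=1/\nu_i$.

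For the branch points over $\lambda=0$ and $\lambda=\infty$ I would use that the extreme coefficients $\omega_{\pm1}$ are nilpotent: they originate from the Hopf field $\mathcal N$ and satisfy the type conditions $*\omega_{\pm1}=\pm i\omega_{\pm1}$, which forces each to be a nilpotent $\mathfrak{sl}(2,\C)$-valued $1$-form. A standard asymptotic analysis of the monodromy of $d+\lambda^{-1}\omega_{-1}+\omega_0+\lambda\omega_1$ as $\lambda\to0$ (respectively $\lambda\to\infty$) then yields a $\lambda^{\mp1/2}$-type leading singularity of $\log\nu_i$, exactly as for the cosh-Gordon spectral curves; the half-integer power is the signature of a ramification point, so $\lambda$ is branched over $0$ and $\infty$. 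This is the step requiring the most care, and the one I would import wholesale from the finite-gap description of the cosh-Gordon equation rather than rederive.

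Finally, the anti-holomorphic involution $\rho$ comes from combining complex conjugation with the gauge equivalence of $d+\omega(-\lambda)$ and $d+\overline{\omega(\bar\lambda^{-1})}$. Conjugating the monodromy of $d+\omega(\lambda)$ and replacing $\lambda$ by $\bar\lambda^{-1}$ shows that $\overline{\nu_i(\bar\lambda^{-1})}$ is an eigenvalue of the monodromy of $d+\overline{\omega(\bar\lambda^{-1})}$, hence of $d+\omega(-\lambda)$; thus $\nu_i(-\lambda)=\overline{\nu_i(\bar\lambda^{-1})}$ up to the interchange $\sigma$, and replacing $\lambda$ by $\bar\lambda^{-1}$ gives $\nu_i(-\bar\lambda^{-1})=\overline{\nu_i(\lambda)}$. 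I would therefore define $\rho$ to send the point over $\lambda$ carrying the eigenvalues $\nu_i$ to the point over $-\bar\lambda^{-1}$ carrying $\bar\nu_i$; one checks directly that $\rho$ is anti-holomorphic, covers $\lambda\mapsto-\bar\lambda^{-1}$, squares to the identity, and satisfies $\rho^*\nu_i=\bar\nu_i$, after composing with $\sigma$ if necessary to fix the sheet. This establishes all four items and shows $\Sigma$ has the same structure as a cosh-Gordon spectral curve.
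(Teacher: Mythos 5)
Your proposal is correct and follows essentially the same route as the paper: both deduce everything from the block-diagonal normal form of Proposition \ref{pro:babo}, identifying $\Sigma$ with the holonomy spectral curve of the $\SL(2,\C)$-family $d+\omega(\lambda)$, obtaining $\sigma^*\nu_i=1/\nu_i$ from the $\SL(2,\C)$-structure, $\rho$ from the quaternionic conjugation $\jj$ (your ``conjugation plus the gauge equivalence $d+\omega(-\lambda)\sim d+\overline{\omega(\bar\lambda^{-1})}$'' is exactly this), and the four distinct eigenvalues from the non-unimodular, non-real monodromy statement in Proposition \ref{pro:babo}. If anything, you are more explicit than the paper on two points it leaves implicit, namely the branching over $\lambda=0,\infty$ (via nilpotency of $\omega_{\pm1}$ and Hitchin-type asymptotics, correctly flagged as imported from the cosh-Gordon finite-gap theory) and the genericity argument for the four eigenvalues.
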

\begin{proof}
By the previous proposition, the spectral curve $\Sigma$ is given by the holonomy spectral curve of the family of flat connections $\hat\nabla^\lambda = d+\omega(\lambda).$ Since $\hat \nabla^\lambda$ is SL$(2, \C),$ the hyper-elliptic involution $\sigma$ maps an eigenvalue of the monodromy to its inverse. The other involution $\rho$  is induced by the quaternionic multiplication $\jj$, which covers $\lambda\mapsto-\bar\lambda^{-1}$ and (complex) conjugates the eigenvalues of the monodromy. Moreover, the parameter covering $\mu = \lambda^2$ is unbranched over $\C^*$. Thus the quotient $\Sigma/\sigma$ is biholomorphic to $\C P^1.$
\end{proof}

\subsection{Non-minimal Babich-Bobenko tori}\label{appendix}
We show a modified version of Corollary \ref{coshgordon} for Babich-Bobenko surfaces
\[f : M \longrightarrow S^3\]
with mean curvature $H \neq 0$ (and $|H| <1$) in the hyperbolic space $\mathcal H^3 \subset S^3$.
Again we use the notations as introduced in Section \ref{quatlc} (or \cite{Qui} for more details) and consider the $\C^*$-associated family of flat connections
\[d^{\lambda}_{\eta,S}\]
on the trivial $\C^4$-bundle for the Lagrange multiplier $\eta$ given by \[\eta=H\eta_\infty,\]
where $\eta_\infty$ is defined in \cite[Theorem 8.16]{Qui}. For further references see \cite{BuCa,BuQu,Qui2}.
The connections $d^{\lambda}_{\eta,S}$  induce the family of flat connections 
\[d^\lambda_{\eta}=\mathcal D+\lambda^{-1}\mathcal N^{(1,0)}+\lambda \mathcal N^{(0,1)}+(\lambda^{-2}-1)\eta^{(1,0)}+(\lambda^{2}-1)\eta^{(0,1)},\]
on the $\Lambda^2\C^4$
with Lagrange multiplier $\eta$.
By \cite[Lemma 9.14]{Qui} and \cite[Equation (2.11)]{Bohle} the connections $d^{\lambda}_{\eta,S}$ and the constrained Willmore associated family of flat connections 
$\nabla^\mu$ defined in \eqref{nabmu} are gauge equivalent for $\mu=\lambda^2$.

The surface $f$ is an isothermic constrained Willmore torus by assumption and admits a conserved quantity
\cite[Proposition 8.20]{Qui}. Since we are considering surfaces in $S^3\subset S^4$, there is for every $\lambda\in\C^*$ a complex 2-dimensional subspace of
$\Lambda^2\C^4$  on which
$d^\lambda_{\eta}$ acts trivially, see \eqref{la2d}. Applying a suitable $SL(4,\C)$-transformation (depending on $\lambda$ and $p\in M$)
we can assume without loss of generality that the invariant subspace is spanned by
\[v= e_1\wedge e_2,\ e_3\wedge e_4.\]
A short computation as in Section \ref{babostructure}  shows that $d^{\lambda}_{\eta,S}$ is of the form
\begin{equation}\label{blockform}d+\begin{pmatrix} A(\lambda) &0\\0& B(\lambda)\end{pmatrix}\end{equation}
for $A(\lambda),B(\lambda)\in \Omega^1(M,\mathfrak{sl}(2,\C))$ (with respect to the chosen $\lambda$-dependent frame). Recall that the reality condition implies that  $d^\lambda_{\eta}$ reduces to a $SO(3,1)$-connection for every $\lambda \in S^1$. This implies
\[\bar A(\lambda)=B(\lambda) \quad \text{ for } \lambda \in S^1.\]
\begin{Rem}
For CMC surfaces in $S^3$ the connection $d^\lambda_{\eta}$ reduces to a $SO(4)$-connection with reality condition $-\bar A^T=A$ and $-\bar B^T=B.$\end{Rem} 
\begin{Pro}
The spectral curve of a Babich-Bobenko torus $f: T^2 \longrightarrow S^3$
(corresponding to the Lagrange multiplier $\eta$ defined in \cite[Theorem 8.16]{Qui}) is a hyper-elliptic surface
\[\lambda\colon\Sigma\longrightarrow\CP^1\]
with an anti-holomorphic involution $\rho$ covering $\lambda\mapsto-\bar\lambda^{-1}.$
Moreover, $\Sigma$ is endowed with two meromorphic differentials $\theta_1,\theta_2$ of the second kind satisfying 
\[\sigma^*\theta_k=-\theta_k,\;\;\rho^*\theta_k=\bar\theta_k,\]
where $\sigma$ is the hyper-elliptic involution and two holomorphic functions $\nu_1,\nu_2\colon\Sigma\setminus\lambda^{-1}\{0,\infty\}$
with $d\log\nu_k=\theta_k$. 

The functions $\nu_i$ parametrizes the eigenvalues of $\nabla^\mu$ along the generator $\gamma_k,$ of the first fundamental group of $T^2.$  The (generically) four eigenvalues of $\nabla^\mu$ are given by 
\[\{\nu_k(\xi) \mid k= 1,2 \quad \text{ and } \quad \xi\in \Sigma : \mu=(\lambda(\xi))^2\}.\]
\end{Pro}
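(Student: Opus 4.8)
The plan is to follow the proof of Proposition \ref{pro:babo} line by line, replacing the vanishing Lagrange multiplier by $\eta = H\eta_\infty$ and keeping track of the extra $\lambda^{\pm 2}$-contributions it produces. The starting point is the block-diagonal normal form \eqref{blockform}: after the $\lambda$-dependent $\SL(4,\C)$-transformation, $d^\lambda_{\eta,S} = d + \mathrm{diag}(A(\lambda), B(\lambda))$ with $A(\lambda),B(\lambda) \in \Omega^1(M,\mathfrak{sl}(2,\C))$, gauge equivalent to $\nabla^\mu$ for $\mu=\lambda^2$ by \cite[Lemma 9.14]{Qui} and \cite[Equation (2.11)]{Bohle}, and with reality condition $\bar A(\lambda)=B(\lambda)$ for $\lambda\in S^1$. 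Since the holonomy of $\nabla^\mu$ thereby splits into the two commuting $\SL(2,\C)$-blocks, I would define $\Sigma$ as the holonomy spectral curve of the single family $\hat\nabla^\lambda := d+A(\lambda)$ over $M=T^2$, exactly as $\Sigma$ was defined from $d+\omega(\lambda)$ in Corollary \ref{coshgordon}.

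Hyperellipticity and the involution $\sigma$ then come essentially for free from the abelian fundamental group together with the $\SL(2,\C)$-structure: the monodromies of $\hat\nabla^\lambda$ along $\gamma_1,\gamma_2$ commute and are simultaneously diagonalizable for generic $\lambda$, so the eigenline map $\lambda\colon\Sigma\to\CP^1$ is a double cover whose deck transformation $\sigma$ exchanges the two eigenlines; since $\det=1$, the two eigenvalues along each $\gamma_k$ are reciprocal, which gives $\sigma^*\nu_k=1/\nu_k$, hence $\sigma^*\theta_k=-\theta_k$ for $\theta_k:=d\log\nu_k$. The genuinely new feature relative to \eqref{lam11} is the presence of $\lambda^{\pm 2}$-terms in $A(\lambda)$ coming from $\eta$: these force the monodromy to acquire essential singularities at $\lambda=0,\infty$, so that $\nu_k$ only extends to $\Sigma\setminus\lambda^{-1}\{0,\infty\}$ while $\theta_k=d\log\nu_k$ stays single-valued and meromorphic with poles and vanishing residues there, i.e. a differential of the second kind, precisely as in the Cosh-Gordon finite-gap data.

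The anti-holomorphic involution $\rho$ is the main obstacle; it is the analog of the gauge equivalence $d+\omega(-\lambda)\simeq d+\overline{\omega(\bar\lambda^{-1})}$ proved at the end of Proposition \ref{pro:babo}. I would show that $d+A(-\lambda)$ and $d+\overline{A(\bar\lambda^{-1})}$ are gauge equivalent for all $\lambda\in\C^*$, combining two inputs: first, $d^\lambda_{\eta,S}$ and $d^{-\lambda}_{\eta,S}$ are gauge equivalent because both are gauge equivalent to $\nabla^{\mu=\lambda^2}$; second, the reality condition $\bar A(\lambda)=B(\lambda)$, which on $S^1$ reads $B(\lambda)=\overline{A(\bar\lambda^{-1})}$ and extends to all of $\C^*$ by the holomorphic $\lambda$-dependence. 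The explicit gauge would be built, as in Proposition \ref{pro:babo}, from the $\SL(4,\C)$-element implementing $\tilde\nabla^{-\lambda}=\tilde\nabla^\lambda.H$ together with the conformal maps fixing $\tilde v,\tilde v_\infty$. The hard part will be that $\eta=H\eta_\infty$ is now nonzero, so one must verify that the conserved quantity of \cite[Proposition 8.20]{Qui} and the reality of $\eta_\infty$ make $\eta$ transform compatibly under $\lambda\mapsto-\bar\lambda^{-1}$ and complex conjugation, so that the same global $\SL(2,\C)$-valued gauge intertwines the two connections. This equivalence descends to $\rho$ on $\Sigma$ covering $\lambda\mapsto-\bar\lambda^{-1}$ with $\rho^*\nu_k=\bar\nu_k$, that is $\rho^*\theta_k=\bar\theta_k$.

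Finally, to read off the eigenvalues I would assemble the two blocks over a fixed generic $\mu$. The equation $\lambda^2=\mu$ has two roots $\pm\sqrt\mu$, and $\lambda\colon\Sigma\to\CP^1$ has two sheets over each, giving four points $\xi$ with $\lambda(\xi)^2=\mu$ and $\mu\colon\Sigma\to\CP^1$ of degree four. Block $A(\lambda)$ contributes the two eigenvalues $\nu_k$ over $\lambda=+\sqrt\mu$, while by the gauge equivalence establishing $\rho$ the block $B(\lambda)=\overline{A(\bar\lambda^{-1})}$ has the same monodromy eigenvalues as $d+A(-\lambda)$, namely the two eigenvalues over $\lambda=-\sqrt\mu$. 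This identifies the four eigenvalues of $\nabla^\mu$ with $\{\nu_k(\xi)\mid k=1,2,\ \lambda(\xi)^2=\mu\}$ and yields the claimed parametrization along the generators $\gamma_1,\gamma_2$.
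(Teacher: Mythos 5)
Your proposal is correct and its core is the same as the paper's: both arguments start from the block decomposition \eqref{blockform} of $d^\lambda_{\eta,S}$ (gauge equivalent to $\nabla^{\mu=\lambda^2}$ via \cite[Lemma 9.14]{Qui} and \cite[Equation (2.11)]{Bohle}), extract the hyperelliptic involution $\sigma$ from the $\SL(2,\C)$-structure of the blocks (eigenvalues come in reciprocal pairs $h\mapsto h^{-1}$), conclude $\Sigma/\sigma\cong\CP^1$ because the block decomposition holds for all $\mu\in\C^*$, and feed the reality condition $\bar A(\lambda)=B(\lambda)$ on $S^1$ into the real structure; your observation that this condition extends to $B(\lambda)=\overline{A(\bar\lambda^{-1})}$ on all of $\C^*$ by holomorphicity is exactly the right reading of it. Where you diverge is the treatment of $\rho$ and of the differentials $\theta_k$: the paper disposes of all these ``remaining properties'' in one line by citing \cite[Proposition 3.1]{Bohle} together with $\bar A=B$, the point being that the symmetry of the holonomy eigenvalues of $\nabla^\mu$ under $\mu\mapsto\bar\mu^{-1}$ with complex conjugation is already built into Bohle's family through the quaternionic structure $j$, so no explicit gauge transformation is needed. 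You instead propose to rebuild the explicit gauge $H$ from the end of the proof of Proposition \ref{pro:babo} and honestly flag the compatibility of $\eta=H\eta_\infty$ under $\lambda\mapsto-\bar\lambda^{-1}$ as an unverified ``hard part.'' Be aware that in your route this step is genuinely load-bearing: gauge equivalence of the two direct sums $d+\mathrm{diag}(A(\lambda),B(\lambda))$ and $d+\mathrm{diag}(A(-\lambda),B(-\lambda))$ by itself only matches the four eigenvalues as a multiset and does not decide whether the identification of blocks is straight or crossed, and since you define $\Sigma$ from the single block $d+A(\lambda)$ (rather than, as the paper does, from all four eigenvalues of $\nabla^\mu$), the crossed matching $d+B(\lambda)\simeq d+A(-\lambda)$ is what makes your curve carry the full spectrum and your $\rho$ cover $\lambda\mapsto-\bar\lambda^{-1}$. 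The cleanest way to close your flagged gap is therefore not to redo the $H$-gauge computation with nonzero $\eta$, but to do what the paper does: combine the conjugation symmetry from \cite[Proposition 3.1]{Bohle} with your identity $B(\lambda)=\overline{A(\bar\lambda^{-1})}$, which forces the crossed identification and yields $\rho^*\nu_k=\bar\nu_k$, hence $\rho^*\theta_k=\bar\theta_k$, with $\theta_k=d\log\nu_k$ of the second kind; your final assembly of the four eigenvalues over $\lambda^2=\mu$ then agrees verbatim with the paper's statement.
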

\begin{proof}
Let $\tilde \Sigma$ be the constrained Willmore spectral curve given as the parametrization of the (generically 4 distinct) eigenvalues of $\nabla^\mu.$ Since $\nabla^\mu$ is gauge equivalent to \eqref{blockform}, it is the direct sum of two flat SL$(2, \C)$-connections
\[d + A(\lambda) \quad \text{ and } d + B(\lambda)\]
for $\lambda^2=\mu.$
Let $h$ be an eigenvalue of the monodromy of $\nabla^\mu$. We assume without loss of generality that it is a eigenvalue of $d+A(\lambda)$. Since $d+A(\lambda)$ is a SL$(2,\C)$-connection, $h^{-1}$ is also a eigenvalue of $d+A(\lambda).$ Thus we can define an involution 
\[\sigma : \Sigma \longrightarrow \Sigma, \quad h \longmapsto h^{-1}\]
(note that $\sigma$ holomorphically extends to $\mu=0,\infty$ and therefore is well-defined on $\Sigma$).
Since the decomposition into blocks is valid for all $\mu\in\C^*$, the quotient $\Sigma/\sigma$ is $\CP^1.$ The remaining properties can be easily proved using  \cite[Proposition 3.1]{Bohle} together
with the reality condition $\bar A=B.$
\end{proof}
\begin{Rem}
Note that for $H\neq0$, the connections $d^{\lambda}_{\eta,S}$ and the connections given by \eqref{blockform}
are gauge equivalent by a  $\lambda$-dependend  gauge transformation. Thus, the map  $\lambda \colon\Sigma\longrightarrow\Sigma/\sigma$ is not necessarily branched over $0$ and $\infty$ as in the $H=0$ case. 
\end{Rem}

\section{Pl\"ucker estimates}\label{sec:pluecker}

We show that all isothermic constrained Willmore tori of Babich-Bobenko type have Willmore energy above $8\pi.$ The following Pl\"ucker estimate is relating the dimension of the holomorphic sections of $V/L$ (without monodromy) to the Willmore energy of the corresponding immersion. 

\begin{The}[\cite{FLPP}, Theorem 4.12]\label{Plücker}
Let $f:T^2 \longrightarrow S^3$ be a conformal immersion and $(V/L, D)$ be the quaternionic holomorphic line bundle associated to it. Let $k \in \N$ be the dimension of $H^0(T^2,V/L)$ (with trivial monodromy). Then a lower bound for the Willmore energy of $f$ is given by
$$W(f) \geq  \begin{cases}
    \pi k^2 & k \; even \\
     \pi (k^2 - 1) & k \; odd   .\end{cases}$$
     
\end{The}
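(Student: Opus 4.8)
The plan is to deduce Theorem~\ref{Plücker} from the general quaternionic Pl\"ucker formula for linear systems on a quaternionic holomorphic line bundle, specialized to $W=V/L$ over a genus-one surface. Write the holomorphic structure as $D=\dbar+Q$, where $\dbar$ is the complex-holomorphic part and $Q\in\Gamma(\bar K\,\End_-(V/L))$ is the Hopf field; recall that the quaternionic Willmore energy $2\int_{T^2}\langle Q\wedge *Q\rangle$ of the bundle equals $\mathcal W(f)$, and that $\deg(V/L)=1-g=0$ for a torus. Setting $n+1=k=\dim_\H H^0(T^2,V/L)$, the target is an energy/ramification balance which, after specialization, reads $\tfrac{1}{4\pi}\mathcal W(f)\ge\lfloor k^2/4\rfloor$. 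Since $\lfloor k^2/4\rfloor=k^2/4$ for $k$ even and $(k^2-1)/4$ for $k$ odd, multiplying by $4\pi$ returns exactly the two cases of the theorem.

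First I would set up the local theory of holomorphic sections. A section $\psi$ with $D\psi=0$ solves $\dbar\psi=-Q\psi$, a zeroth-order perturbation of the Cauchy--Riemann operator, so by the Bers--Vekua similarity principle $\psi$ admits near each point a factorization $\psi=z^{m}\phi$ with $\phi$ nowhere vanishing. This yields a well-defined finite vanishing order $\ord_p\psi\ge 0$ with isolated zeros, exactly as for honest holomorphic sections. With this in hand I would form the Kodaira correspondence of the $k$-dimensional system $H=H^0(V/L)$ and its osculating (derived) flag $H=H_0\supset H_1\supset\dots$, obtained by repeatedly applying $\dbar$ to the $\dbar$-jets of sections at a generic point, and package the flag into a single quaternionic Wronskian section $\tau$.

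The decisive step is the global degree count for $\tau$. Because $\H$ is non-commutative the naive exterior product is ill-defined; the remedy is to use the complex structure $J$ of the quaternionic bundle and the induced $\pm i$-eigenbundle splitting (as in Section~\ref{quatlc}) to reinterpret the quaternionic holomorphic data as complex-holomorphic data twisted by the antilinear field $Q$. Then $\tau$ becomes a section of an explicit complex line bundle whose degree is computable from $\deg(V/L)$, $n$ and $g$ via Riemann--Roch, and the classical total-ramification identity $\sum_p w(p)=(n+1)\deg W+n(n+1)(g-1)$ governs the complex osculating orders. Crucially, $\tau$ fails to be complex-holomorphic precisely by a $Q$-term, so $\dbar\tau$ is controlled by $Q$; integrating the resulting curvature identity over $T^2$ and applying Stokes identifies the holomorphicity defect with a multiple of $\int\langle Q\wedge *Q\rangle$, that is, with $\mathcal W(f)$. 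For $g=1$ and $\deg(V/L)=0$ the classical degree and genus contributions vanish, and using $\ord\ge 0$ together with the generic quaternionic Weierstrass staircase (which supplies the term $\lfloor k^2/4\rfloor$) leaves $\tfrac{1}{4\pi}\mathcal W(f)\ge\lfloor k^2/4\rfloor$.

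I expect the main obstacle to be exactly this identification of the holomorphicity defect of the Wronskian with the Willmore energy: one must show that the \emph{only} contribution obstructing complex-holomorphicity is the antilinear Hopf field $Q$, and that its total contribution is $\int\langle Q\wedge *Q\rangle$ with the correct constant, while simultaneously controlling local vanishing orders so that the similarity principle delivers $\ord\ge 0$ with the right generic staircase. Once the balance $\tfrac{1}{4\pi}\mathcal W(f)\ge\lfloor k^2/4\rfloor$ is in place the parity split is automatic, the relevant non-negative integer being $k^2/4$ when $k$ is even and $(k^2-1)/4$ when $k$ is odd.
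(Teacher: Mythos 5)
The paper does not actually prove this statement: it is imported verbatim as \cite[Theorem 4.12]{FLPP}, so your proposal has to be measured against the proof in that source. Your outline does reproduce the FLPP framework correctly up to the last step: the similarity principle giving well-defined vanishing orders, the Kodaira correspondence and the osculating (Weierstrass) flag of the linear system $H=H^0(V/L)$, and the quaternionic Pl\"ucker formula in which the $\dbar$-defect of the Wronskian is identified with the Hopf-field energy $\int\langle Q\wedge *Q\rangle$, i.e.\ with $\mathcal W(f)$. The specialization $g=1$, $\deg(V/L)=0$ is also right (for a torus in $S^3$ the normal bundle contribution vanishes, so both sign conventions for the degree formula give $0$).

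The genuine gap is in how you extract $\lfloor k^2/4\rfloor$. With $g=1$ and $d=0$ the Pl\"ucker inequality specializes to $\tfrac{1}{4\pi}\mathcal W \geq \operatorname{ord} H$, and ``$\operatorname{ord}\geq 0$ together with the generic quaternionic Weierstrass staircase'' supplies nothing beyond $\mathcal W\geq 0$: having $k$ quaternionically independent holomorphic sections does \emph{not} by itself force the system $H$ to be ramified anywhere, because the classical obstruction ($h^0\leq 1$ for a degree-zero bundle over a torus) fails quaternionically exactly when energy compensates --- which is the content of the theorem, not an input to it. The missing idea in \cite{FLPP} is a base-point/twisting argument: fix $p\in T^2$; evaluation of $(j-1)$-jets at $p$ is quaternionic linear (this is where the similarity principle is really used), so the subsystem $H_j\subset H$ of sections vanishing at $p$ to order at least $j$ has $\dim_{\mathbb{H}} H_j\geq k-j$; its elements are holomorphic sections of the twisted bundle $V/L\otimes\mathcal{O}(-jp)$, which carries the \emph{same} Hopf field and hence the same energy, but has degree $-j$. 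Applying the Pl\"ucker inequality to $H_j$ on this twisted bundle gives
\[
\tfrac{1}{4\pi}\,\mathcal W(f)\;\geq\;(k-j)\,j ,
\]
and optimizing over $j$, i.e.\ $j=\lfloor k/2\rfloor$, yields $\lfloor k^2/4\rfloor$, whence the parity split $\pi k^2$ versus $\pi(k^2-1)$. Without this twisting step (or an equivalent dual-curve argument) the balance $\tfrac{1}{4\pi}\mathcal W(f)\geq\lfloor k^2/4\rfloor$ that your proposal treats as the specialized formula is simply not established.
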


\begin{Rem}
For every immersion into $S^3$ the sections $[1,0]$ and $[0,1]$ are holomorphic sections without monodromy. Thus $H^0(V/L)$ is at least $2$-dimensional. The most relevant cases in the following are: if there exist a third quaternionic linearly independent holomorphic section, then the Willmore energy of $f$ is at least $8 \pi$, if there exist a fourth quaternionic linearly independent holomorphic section, the lower bound is $16\pi.$
\end{Rem}

\begin{Rem}
We have shown in Section \ref{babostructure}  and Section \ref{appendix} that the associated family of flat connections $\nabla^\mu$ of a Babich-Bobenko torus has four distinct eigenvalues  for generic $\mu \in\C_*$. Thus by Bohle \cite{Bohle} $\hat L= $Ker$A_\circ$ is a non-constant quaternionic line subbundle of $V.$ 
\end{Rem}

\begin{Lem}\label{holoexistence}
Let $f: T^2 \longrightarrow S^3$ be a Babich-Bobenko torus with Willmore energy below $8\pi.$ Then
every branch point of the spectral curve $\lambda\colon\Sigma \longrightarrow \Sigma/\sigma \cong\C P^1$ except those over $\lambda=0$ and $\lambda=\infty$ corresponds to a non-constant holomorphic section of $V/L$ with $\Z_2$-monodromy.
\end{Lem}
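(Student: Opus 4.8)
The plan is to connect branch points of the spectral curve $\lambda\colon\Sigma\to\Sigma/\sigma\cong\C P^1$ to holomorphic sections of $V/L$ with prescribed monodromy. Recall that a point of $\Sigma$ corresponds to a $\nabla^\mu$-parallel section with monodromy, which by Bohle's identification is the unique prolongation of a holomorphic section of $(V/L,D)$ with the same monodromy. A branch point of $\lambda\colon\Sigma\to\Sigma/\sigma$ is a fixed point of the hyperelliptic involution $\sigma$. Since $\sigma$ inverts the monodromy eigenvalues (as established in Corollary \ref{coshgordon}), a fixed point $\xi_0$ of $\sigma$ must satisfy $\nu_i(\xi_0)=\nu_i(\xi_0)^{-1}$ for the monodromy maps along the generators $\gamma_i$, forcing $\nu_i(\xi_0)\in\{+1,-1\}$. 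The branch points over $\lambda=0$ and $\lambda=\infty$ are excluded by hypothesis, so we work over $\lambda\in\C^*$, i.e.\ over $\mu=\lambda^2\in\C^*$.

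First I would argue that the monodromy at such a branch point cannot be trivial, i.e.\ not all $\nu_i(\xi_0)=+1$. This is where the Willmore bound below $8\pi$ enters: if the monodromy were trivial, the corresponding holomorphic section of $V/L$ would have no monodromy and would be quaternionic linearly independent from the standard sections $[1,0]$ and $[0,1]$, producing a third independent element of $H^0(T^2,V/L)$ and hence, by Theorem \ref{Plücker}, Willmore energy at least $8\pi$, contradicting the assumption. One must check that the section obtained at $\xi_0$ is genuinely new relative to $[1,0],[0,1]$; here I would use that the spectral curve construction is nonconstant (the eigenline moves) together with the remark that $\hat L=\Ker A_\circ$ is a non-constant line subbundle, so that the parallel section at $\xi_0$ is not one of the constant sections.

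Consequently, at least one $\nu_i(\xi_0)=-1$, so the monodromy of the holomorphic section takes values in $\{\pm1\}$ along the $\gamma_i$, i.e.\ it is $\Z_2$-monodromy. It remains to verify the section is \emph{non-constant}, which again follows from the fact that over $\lambda\in\C^*$ the eigenline of $\nabla^\mu$ genuinely depends on the point of $\Sigma$ and does not degenerate to a parallel section of the trivial connection $d$; the excluded points $\lambda=0,\infty$ are precisely the locations where such degeneration could occur, which is why they are omitted from the statement.

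The main obstacle I anticipate is the dichotomy argument that forces $\nu_i(\xi_0)=-1$ for at least one $i$: one must rule out the fully trivial monodromy case cleanly, and this requires carefully establishing that the holomorphic section produced at a branch point with trivial monodromy is quaternionic linearly independent from $[1,0]$ and $[0,1]$. Making this independence rigorous — as opposed to merely counting the abstract dimension of the space of parallel sections — is the delicate point, and it is exactly here that the $8\pi$ energy threshold is used via the Plücker estimate to derive the contradiction. The branching and involution structure from Corollary \ref{coshgordon} supplies the rest essentially formally.
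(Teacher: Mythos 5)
Your proposal reproduces the easy half of the paper's argument but misses its central difficulty, and the step you substitute is false. The identification of branch points of $\lambda\colon\Sigma\to\Sigma/\sigma$ with parallel sections whose monodromy lies in $\{\pm1\}$ (via $\sigma^*\nu_i=\nu_i^{-1}$) agrees with the paper, as does the treatment of constancy at branch points with $\mu_0=\lambda_0^2\neq 1$: there, a constant parallel section would lie in $\Ker A_\circ$, forcing $\hat L=\Ker A_\circ$ to be constant and contradicting \cite[Theorem 5.3]{Bohle} --- note this needs no energy bound. The gap is at branch points over $\mu_0=1$, i.e.\ $\lambda_0=\pm1\in\C^*$, which are \emph{not} excluded by the statement. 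Your claim that ``the eigenline does not degenerate to a parallel section of the trivial connection $d$'' over $\C^*$, with $\lambda=0,\infty$ being ``precisely the locations where such degeneration could occur,'' is wrong: $\nabla^{\mu}=d$ exactly at $\mu=1$, so there \emph{every} parallel section is constant (the points $\lambda=0,\infty$ are excluded because $\nabla^\mu$ has poles there, not because of degeneration to $d$). Consequently your dichotomy ``trivial monodromy $\Rightarrow$ third quaternionic-independent section $\Rightarrow$ $\Will\geq 8\pi$, contradiction'' is circular at such a branch point: the monodromy there is automatically trivial, the naive associated section is constant and hence \emph{not} independent of $[1,0],[0,1]$, so no Pl\"ucker contradiction arises, and your conclusion that some $\nu_i(\xi_0)=-1$ at every branch point is unjustified. (It would also be at odds with how the lemma is used: ``$\Z_2$-monodromy'' in the statement includes the trivial one, and the trivial-monodromy case is explicitly carried along in Lemma \ref{spinstructure} and Theorem \ref{H3}.)

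What the paper actually does at $\mu_0=1$ is a second-order jet argument, which is entirely absent from your proposal and is also where the hypothesis $\Will<8\pi$ genuinely enters. Since $\rho$ interchanges the two points over $\mu_0=1$, the curve is not totally branched there and one may choose a local coordinate $\xi$ with $\xi^2=\mu-1$; the energy bound below $8\pi$ is then used to invoke \cite[Theorem 4.3 (iii)]{BLPP}, producing a \emph{smooth} family $\psi^\xi$ of $\nabla^{\mu(\xi)}$-parallel sections near $\xi=0$. Differentiating $\nabla^{\mu(\xi)}\psi^\xi=0$ twice at $\xi=0$ gives first $d\psi'=0$ and then
\[
0=(-2A_\circ^{1,0}+2A_\circ^{0,1})\psi+d\psi'',
\]
so $d\psi''$ takes values in $L=\im A_\circ$, i.e.\ $\psi''$ is the prolongation of a holomorphic section of $V/L$; its monodromy is trivial because the monodromy equals $1$ to second order at the branch point, and it is non-constant since otherwise $A_\circ\psi=0$ by type decomposition, again making $\Ker A_\circ$ constant. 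So the energy threshold is used to guarantee the smooth family of parallel sections through the degenerate point $\mu=1$, not, as in your proposal, to exclude trivial monodromy via the Pl\"ucker estimate. Without this construction your proof does not establish the lemma at the branch points over $\mu=1$.
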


\begin{proof}
The spectral curve $\Sigma$ is the surface parametrizing the eigenlines of $\nabla^\mu$ -- the constrained Willmore associated family of flat connections. It admits two involutions: $\sigma$ and $\rho.$ While the involution $\rho$ corresponds to the quaternionic multiplication by $\jj$ and is fixpoint free, the involution $\sigma$ maps a holomorphic section $\psi$ with monodromy $h$ to a holomorphic section with monodromy $h^{-1}$. Therefore the branch points of $\Sigma$ correspond to those $\nabla^\mu$-parallel sections $\psi$ of $V$ with $\Z_2$-monodromy, i.e., prolongations of holomorphic sections of $V/L$ with $\Z_2$-monodromy. It is thus crucial to show that these $\psi$ are non-constant sections of $V$, which clearly holds whenever the monodromy of the section $\psi$ is non-trivial. Thus we restrict to the case where $\psi$ has trivial monodromy.

Let $\lambda_0$ be a branch point of $\Sigma$ and $\psi_0$ be the $\nabla^{\mu_0}$-parallel section of $V$ without monodromy associated to $\lambda_0$ where $\mu_0=\lambda_0^2$.  If $ \mu_0 \neq 1$ then $\psi$ must be non-constant, since $\psi$ being constant would imply that $\tilde \psi \in \Gamma($Ker${A_\circ})$ and then $\hat L= $Ker$A_\circ$ would be constant in contradiction to \cite[Theorem 5.3]{Bohle}.

It remains to show that also for the case  $\mu_0=\lambda_0^2=1$
there exists
a non-constant section $\tilde \psi$, given by a prolongation of a holomorphic section of $V/L,$ with trivial monodromy.
Let $\nabla^\mu$ be the CW associated family, and $\mu_0=1$ be a branch point of the spectral curve $\Sigma$. 
Because of the $\rho$-symmetry (interchanging  the two points over $\mu_0=1$) $\Sigma$ is not totally branched at $\mu_0$, i.e., we can use a local coordinate
$\xi$ on $\Sigma$ with $\xi^2=\mu-1.$
Assume that the Willmore energy is below $8\pi.$
By \cite[Theorem 4.3 (iii)]{BLPP}  there is 
a smooth family of $\nabla^{\mu(\xi)}$-parallel sections $\psi^\xi$ parametrized on an open subset of $\Sigma$ around $\xi=0$ depending smoothly on $\xi$, i.e., we have
\[\nabla^{\mu(\xi)}\psi^\xi=0.\]

Differentiating this equation with respect to $\xi$ (denoted by $()'$)  at $\xi=0$ (and therefore  $\mu=1$) gives 

\[0=(\nabla^{\mu(x)})'\psi+d \psi'=\big(\frac{d \mu(\xi)}{d\xi}_{\mid\xi=0} A_\circ^{1,0}+\frac{d \mu^{-1}(\xi)}{d\xi}_{\mid\xi=0}  A_\circ^{0,1}\big)\psi+d\psi'=d\psi',\]

where $\psi=\psi^{\xi=0}$ is constant in $z \in T^2$(as $\nabla^{\mu=1}=d$). Differentiating once more and evaluating at $\xi=0$ thus gives

\[0=(\nabla^{\mu(x)})''\psi+2(\nabla^{\mu(x)})'\psi'+d \psi''=(-2A_\circ^{1,0}+2A_\circ^{0,1})\psi+d \psi''.\]

Since $(-2A_\circ^{1,0}+2A_\circ^{0,1})\psi$ is contained in $L = f^* \mathcal T =$ Im $(A_\circ)$, $d\psi''$ is contained in $L$ as well showing that $\psi''$ is the prolongation of a (locally) holomorphic section of $V/L$.
Since the monodromy takes the value 1 with at least second order (because the monodromy is trivial at the branch point $\mu = 1$),  $\psi''$ has also trivial monodromy. If $\psi''$ would be constant then
\[0=(-2A_\circ^{1,0}+2A_\circ^{0,1})\psi,\]
which yields that Ker$A_\circ$ is constant giving a contradiction. 
\end{proof}

\begin{Lem}\label{linearindependence}
Two holomorphic sections of $V/L$ with non-trivial $\Z_2$-monodromy corresponding to different branch points of $\lambda\colon\Sigma\to\Sigma/\sigma=\CP^1$ not lying over $0$ or $\infty$, which are not interchanged by the involution $\rho,$ are quaternionic linear independent.
\end{Lem}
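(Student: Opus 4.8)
The plan is to show that quaternionic dependence of the two sections forces the underlying branch points to coincide or to be $\rho$-conjugate, contradicting the hypothesis. Write $\varphi_1,\varphi_2$ for the holomorphic sections of $V/L$ with $\Z_2$-monodromy attached to the branch points $b_1,b_2$, and let $\psi_1,\psi_2$ be their prolongations to $V$, so that $\psi_j$ is $\nabla^{\mu_j}$-parallel with monodromy $\nu_i(b_j)\in\{\pm1\}$ and $\mu_j=\lambda(b_j)^2$. I first record the algebraic reduction: since $V/L$ is a quaternionic line bundle, $\varphi_1,\varphi_2$ are quaternionic dependent if and only if $\varphi_1=\varphi_2 q$ for some $q\in\H^\ast$ (a vanishing coefficient is trivial, as the $\varphi_j$ are non-constant). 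Comparing monodromies and using that the $\nu_i(b_j)$ are real and central gives $\nu_i(b_1)=\nu_i(b_2)$ for all $i$, so the two points carry the same $\Z_2$-character; this coincidence of characters is exactly the case that requires work, since the naive separation by monodromy is unavailable.

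The geometric input is that right multiplication by $\jj$ realizes the involution $\rho$. Indeed $D$ is quaternionic, so $\varphi_2\jj$ is again holomorphic, and since $\jj h=\bar h\,\jj$ for $h\in\C$ its monodromy is $\overline{\nu_i(b_2)}=\nu_i(\rho(b_2))$; as the kernel-bundle fibre over $\rho(b_2)$ is one-dimensional we obtain $\varphi_2\jj=\varphi_{\rho(b_2)}$ up to scale, and likewise $\psi_2\jj=\psi_{\rho(b_2)}$. Decomposing $q=c+\jj c'$ with $c,c'\in\C$ turns $\varphi_1=\varphi_2 q$ into a complex-linear relation
\[
s:=\psi_1-c\,\psi_2-c'\,\psi_{\rho(b_2)}\in\Gamma(L)
\]
among the prolongations at the three points $b_1,b_2,\rho(b_2)$, which are pairwise distinct because $b_2\neq b_1,\rho(b_1)$ and $\rho$ is fixed-point free.

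The heart of the argument is to show that no non-trivial complex combination $s=\sum_k a_k\psi_k$ of prolongations can lie in $\Gamma(L)$, and here I would exploit the immersion property. Since $L=\Im(A_\circ)\subseteq\Ker(A_\circ)$, the operator $A_\circ$ annihilates $L$, so $A_\circ^{1,0}s=A_\circ^{0,1}s=0$; differentiating $s$ by means of $d\psi_k=-(\mu_k-1)A_\circ^{1,0}\psi_k-(\mu_k^{-1}-1)A_\circ^{0,1}\psi_k$ and cancelling the two vanishing sums $\sum_k a_kA_\circ^{1,0}\psi_k=\sum_k a_kA_\circ^{0,1}\psi_k=0$ leaves
\[
ds=-\sum_k a_k\mu_k A_\circ^{1,0}\psi_k-\sum_k a_k\mu_k^{-1}A_\circ^{0,1}\psi_k\in\Omega^1(L).
\]
In particular the $(1,0)$-part $\partial s$ lies in $\Omega^{1,0}(L)$, so the $(1,0)$-component of the tensorial derivative $\pi_{V/L}\circ d|_L$ applied to $s$ vanishes; but this bundle map is fibrewise injective precisely because $f$ is an immersion, whence $s=0$.

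It remains to convert $s=0$ into a contradiction. In our situation $s=0$ reads $\psi_1=\psi_2 q$, i.e. the prolongations $\psi_1,\psi_2$ span the same quaternionic subbundle of $V$. However, the spectral curve is by construction the parametrization of the eigenlines of the holonomy of $\nabla^\mu$, with $\rho$ accounting precisely for the $\jj$-redundancy; hence points that are neither equal nor $\rho$-conjugate give distinct quaternionic eigenlines, so $\psi_1\notin\psi_2\H$, contradicting $\psi_1=\psi_2 q$. This proves the quaternionic independence of $\varphi_1$ and $\varphi_2$. The main obstacle is exactly the coincidence of the $\Z_2$-character at $b_1$ and $b_2$: once separation by monodromy fails, one must distinguish the points through the finer data of the spectral curve, which I package here through the immersion isomorphism and the injectivity of the eigenline parametrization modulo $\rho$.
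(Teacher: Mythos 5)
Your reductions are sound as far as they go, and they parallel the paper's: sections with different $\Z_2$-characters are trivially independent; $\psi_2\jj$ is $\nabla^{\bar\mu_2^{-1}}$-parallel and realizes the point $\rho(b_2)$; and your argument that $s\in\Gamma(L)$ with $\pi_{V/L}\,ds=0$ forces $s=0$ is, in substance, just the uniqueness of prolongations. One side-claim, however, is wrong: the inclusion $L=\Im(A_\circ)\subseteq\Ker(A_\circ)$ does not hold in this theory. In Bohle's setup $\Ker A_\circ=\hat L$ is a \emph{second} quaternionic line bundle, generically different from $L$, and its non-constancy is a nontrivial theorem (\cite[Theorem 5.3]{Bohle}, applicable here because the paper has shown $\nabla^\mu$ has four distinct eigenvalues for generic $\mu$); if $L$ sat inside $\Ker A_\circ$ the paper's repeated appeals to that theorem would be vacuous. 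Fortunately your cancellation step is also unnecessary: $ds\in\Omega^1(L)$ already follows termwise from $d\psi_k=-(\mu_k-1)A_\circ^{1,0}\psi_k-(\mu_k^{-1}-1)A_\circ^{0,1}\psi_k$ and $\Im A_\circ\subseteq L$, so the conclusion $s=0$, i.e.\ $\psi_1=\psi_2c+\psi_2\jj\, c'$, survives.

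The genuine gap is your final step. The assertion that the spectral curve parametrizes quaternionic eigenlines injectively modulo $\rho$ is not true ``by construction'': the construction guarantees that each point yields a parallel section, but nothing in it prevents parallel sections for two \emph{different} values $\mu_1\neq\mu_2$ (different connections!) from spanning the same quaternionic line pointwise --- ruling this out is precisely the content of the lemma, so your argument assumes what it must prove. The paper closes exactly this hole by differentiating the relation: applying $\nabla^{\mu_1}$ to $\psi_1=\psi_2c+\psi_2\jj\, c'$ and using $\nabla^{\mu_2}\psi_2=0$ and $\nabla^{\bar\mu_2^{-1}}(\psi_2\jj)=0$ produces terms with coefficients $\mu_1-\mu_2$, $\mu_1^{-1}-\mu_2^{-1}$, $\mu_1-\bar\mu_2^{-1}$, $\mu_1^{-1}-\bar\mu_2$; type decomposition together with the pointwise complex independence of $\psi_2$ and $\psi_2\jj$ then forces $A_\circ\psi_2=0$ (non-$\rho$-conjugacy enters here through $\mu_1\neq\bar\mu_2^{-1}$), whence $d\psi_2=0$, so $\psi_2$ is constant and $\hat L=\Ker A_\circ$ is a constant line bundle, contradicting \cite[Theorem 5.3]{Bohle}. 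That theorem is the hidden input your ``injectivity'' would require: when $\Ker A_\circ$ is constant, distinct spectral points \emph{can} share a quaternionic line. Note also that your blanket claim silently covers the case $\mu_1=\mu_2$ (distinct branch points over the same $\mu$), where eigenvalue separation is unavailable because both monodromies lie in $\{\pm1\}$; the paper handles that case separately via the block structure of Proposition \ref{pro:babo}, using $\xi_1\neq\xi_2$ and $\xi_1\neq\rho(\xi_2)$.
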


\begin{proof}
Let $\tilde \psi_1$ and  $\tilde\psi_2$ be two 
holomorphic sections of $V/L$ with $\Z_2$-monodromy. If these sections have different $\Z_2$-monodromies, then they are clearly quaternionic linear independent. Thus let the $\psi_i$ have the same non-trivial $\Z_2$-monodromy in the following.

Due to \cite[Section 2.5]{Bohle} and the fact that their monodromy is non-trivial on $T^2$, it is enough to prove that their prolongations $\psi_1$ and $ \psi_2$, which are parallel sections with respect to
$\nabla^{\mu_1}$ and $\nabla^{\mu_2}$ (corresponding to the branch points $\lambda_1$ and $\lambda_2$ of $\Sigma\longrightarrow\Sigma/\sigma$), are linear independent. If $\mu_1=\mu_2$ it follows from Proposition \ref{pro:babo} that $\psi_1$ and $\psi_2$ are (quaternionic) linear independent since  $\xi_1\neq\xi_2$ and $\xi_1\neq\rho(\xi_2).$ 

If $\mu_1\neq\mu_2$ we obtain that $\tilde \psi_1$ and $\tilde\psi_2$ are complex linear independent. Assume that they are not independent as quaternionic sections, then we would have w.l.o.g.
 
 \[\tilde \psi_1=a\tilde \psi_2+b\tilde \psi_2\jj\]
 for some $a,b \in \C.$ Moreover, from 
 \[\nabla^{\mu_1}\tilde \psi_1=0; \;\;\; \nabla^{\mu_2}\tilde\psi_2=0\;\;\; \nabla^{\bar\mu_2^{-1}}\tilde \psi_2\jj=0\]
 we obtain 
\begin{equation}
\begin{split}
0= &(\mu_1^{-1}-\mu_2^{-1}) A_\circ^{1,0} a\tilde\psi_2\\
 +&(\mu_1^{-1}-\bar\mu_2) A_\circ^{1,0} a\tilde\psi_2\jj\\
 +&(\mu_1-\mu_2) A_\circ^{0,1} a\tilde\psi_2\\
 +&(\mu_1-\bar\mu_2^{-1}) A_\circ^{1,0} a\tilde\psi_2\jj.\\
\end{split}
\end{equation}
By type decomposition (see \cite[Section 2.1]{FLPP}) we obtain $A_\circ\psi_2=0$. Since $\psi_2$ is $\nabla^{\mu_2}$-parallel this implies $\psi_2$ being constant, which is a contradiction by \cite[Theorem 5.3]{Bohle}
\end{proof}

\begin{Lem}\label{spinstructure}
Let $f\colon T^2\longrightarrow S^3$ be a constrained Willmore torus of Babich-Bobenko type with spectral genus $g \geq 3.$
Then, either one of the branch points of the spectral curve over the unit disc $D \subset \C$ corresponds to the trivial monodromy or at least two of the branch points of the spectral curve on the punctured unit disc $D_*$ correspond to the same (non-trivial) $\Z_2$-monodromy.
\end{Lem}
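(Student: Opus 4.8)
The plan is to count branch points of the spectral curve $\Sigma$ and exploit the symmetries $\sigma$ and $\rho$ together with the Plücker estimate to force a collision of monodromies. First I would recall the structure established in Section \ref{babostructure}: the hyperelliptic cover $\lambda\colon\Sigma\to\Sigma/\sigma\cong\CP^1$ has a spectral curve of odd genus $g$, and by Riemann–Hurwitz the number of branch points of $\lambda$ equals $2g+2$. These branch points come with the two distinguished ones over $\lambda=0$ and $\lambda=\infty$ (where $\lambda$ is branched by Corollary \ref{coshgordon}), leaving $2g$ further branch points. The anti-holomorphic involution $\rho$ covering $\lambda\mapsto-\bar\lambda^{-1}$ is fixed-point-free and pairs these remaining branch points; since $\rho$ maps a point over $\lambda$ to one over $-\bar\lambda^{-1}$, it interchanges the unit disc $D$ with its exterior. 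Hence exactly $g$ of the $2g$ non-trivial branch points lie over the open unit disc $D$ (more precisely, on $D_*=D\setminus\{0\}$, since $0$ is already accounted for).

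\textbf{The counting argument.} With $g\geq 3$ there are at least three branch points on $D_*$. Each such branch point, by Lemma \ref{holoexistence}, produces a non-constant holomorphic section of $V/L$ with $\Z_2$-monodromy. The monodromy of a section determines a character $\Z_2\times\Z_2\to\{\pm1\}$ via the values along $\gamma_1,\gamma_2$, but here I only need the pair of $\Z_2$-monodromies $(\nu_1,\nu_2)$ evaluated at $\pm 1$; there are exactly four possible $\Z_2$-monodromy types, and the trivial one $(1,1)$ is singled out. The strategy is a pigeonhole dichotomy: either one of these at least three branch points on $D_*$ realizes the \emph{trivial} $\Z_2$-monodromy (the first alternative of the statement), or all of them carry non-trivial $\Z_2$-monodromy. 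In the latter case they take values among the three non-trivial $\Z_2$-monodromy types, so with three branch points on $D_*$ one is tempted to conclude a collision immediately — but one must first remove the branch points that are swapped by $\rho$. Since $\rho$ already maps $D$ to its complement, no two branch points \emph{within} $D_*$ are interchanged by $\rho$, so all the $D_*$-branch points are genuinely distinct mod $\rho$.

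\textbf{The pigeonhole step.} If all branch points on $D_*$ have non-trivial monodromy, they distribute among the three non-trivial classes; with at least three such points, either two share a class (the second alternative), or the three points realize the three distinct non-trivial classes bijectively. The delicate point is that this last configuration must be excluded, which is where I expect the real obstacle to lie: I would argue that if the three non-trivial $\Z_2$-monodromies all occurred, then combined with the \emph{constant} holomorphic sections $[1,0],[0,1]$ (which have trivial monodromy) and the new sections from Lemma \ref{holoexistence}, Lemma \ref{linearindependence} would yield too many quaternionic linearly independent holomorphic sections of $V/L$, pushing the Willmore energy above $8\pi$ via Theorem \ref{Plücker} and contradicting the hypothesis. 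Thus the "all three distinct non-trivial" case is energetically forbidden, leaving only the two alternatives in the statement.

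\textbf{The main obstacle.} The hardest part is the careful bookkeeping that the $g$ branch points on $D_*$ are genuinely distinct modulo $\rho$ and that each yields a section whose $\Z_2$-monodromy is well-defined and lands in the correct finite set, so that the pigeonhole count is valid. In particular one must rule out that a would-be collision is an artifact of two branch points being $\rho$-conjugate (they are not, since both lie in $D_*$) or of a branch point lying over $\lambda=0,\infty$ (explicitly excluded). Once this is in place the conclusion is a clean pigeonhole among the four $\Z_2$-monodromy types applied to at least three sections, and the energetic exclusion of the bijective case via Theorem \ref{Plücker} closes the argument.
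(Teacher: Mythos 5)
Your Riemann--Hurwitz count, the pairing of branch points by $\rho$, and the pigeonhole dichotomy among the four $\Z_2$-monodromy types reproduce the paper's argument for $g>3$, where counting alone suffices. The genuine gap is precisely at the spot you flag as "the delicate point": for $g=3$, excluding the configuration where the three branch points $P_1,P_2,P_3\in D_*$ realize the three distinct non-trivial types. Your proposed exclusion via Theorem \ref{Plücker} fails, for two reasons. Quantitatively: three sections with \emph{pairwise distinct} non-trivial $\Z_2$-monodromies only become genuine holomorphic sections simultaneously on the four-fold cover $\hat T^2$ on which all spin structures trivialize; there one has $k=5$ independent sections (the two constants plus three), whence $\mathcal W(\hat f)\geq \pi(5^2-1)=24\pi$, and since $\mathcal W(\hat f)=4\,\mathcal W(f)$ this gives only $\mathcal W(f)\geq 6\pi$ --- no contradiction with an energy bound of $8\pi$. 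Using one double cover per section is even weaker ($k=3$, so $2\,\mathcal W(f)\geq 8\pi$, i.e.\ $\mathcal W(f)\geq 4\pi$). The entire point of the lemma is that the Plücker argument in Theorem \ref{H3} is only efficient when \emph{two sections share the same} non-trivial monodromy, so that on a single double cover $k=4$, $\mathcal W(\tilde f)\geq 16\pi$, and $\mathcal W(f)\geq 8\pi$; you cannot use that estimate to prove the lemma without circularity. Structurally: the lemma carries no hypothesis $\mathcal W(f)<8\pi$, and the paper's proof is unconditional --- it does not invoke Lemma \ref{holoexistence} at all, since the $\Z_2$-value of $\nu_k$ at a branch point is defined directly from the spectral data ($\sigma^*\nu_k=\nu_k^{-1}$ forces $\nu_k=\pm1$ at $\sigma$-fixed points).

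What the paper actually does for $g=3$ is a period computation that is entirely absent from your proposal. There are four branch points over the closed disc, namely $0,P_1,P_2,P_3$ (in the $H=0$ normalization $\lambda$ is branched at $0$, and the branch point at $0$ carries a spin structure via Hitchin-type renormalized eigenvalues $\tilde\nu_k(0)\in\{\pm1\}$). If none is trivial and $P_1,P_2,P_3$ are pairwise distinct, pigeonhole among the three non-trivial types forces the type at $0$ to agree with, say, $P_1$. The contradiction is then that $P_2$ and $P_3$ must agree: the cycle $\gamma_{23}$ through $P_2,P_3$ is homologous to $\gamma_S-\gamma_{01}$, where $\gamma_S$ runs through the Sym points and $\gamma_{01}$ through $0$ and $P_1$; the renormalization at $0$ together with the assumed matching of types gives $\int_{\gamma_{01}}\theta_k\in 4\pi i\Z$, while the $\rho$-symmetry and the closing condition $\int_{\tilde\gamma}\theta_k\in 2\pi i\Z$ at the Sym points give $\int_{\gamma_S}\theta_k\in 4\pi i\Z$; hence $\int_{\gamma_{23}}\theta_k=2\int_{P_2}^{P_3}\theta_k\in 4\pi i\Z$, i.e.\ $\nu_k(P_2)=\nu_k(P_3)$. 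A further minor error in your bookkeeping: since the paper's $D$ contains the unit circle and $\rho$ covers $\lambda\mapsto-\bar\lambda^{-1}$, which maps $S^1$ to itself antipodally, two branch points in $D_*$ lying on $S^1$ \emph{can} be $\rho$-conjugate, contrary to your assertion; this is why the paper counts branch points "that are not interchanged by $\rho$".
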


\begin{proof}
For $g >3$ we have at least 5 branch points over the punctured unit disc 
$$D:=\{\lambda\in \C\mid 0<|\lambda|\leq1\}.$$
The claim follows from the fact that there exist only $3$ different non trivial spin structures of the torus.

It remains to show the Lemma in the case of $g=3$, where we have $4$ branch points over the unit disc (that are not interchanged by $\rho$). Assume that none of the $4$ branch points on the unit disc corresponds to the trivial spin structure and moreover, for $\mu \neq 0 $ the other $3$ branch points correspond to different spin structures. Then the spin structure at $\mu = 0$ must coincide with the one at $P_k$ for a $k \in \{1, 2, 3\}$, since there exist only $3$ different non-trivial spin structures of a torus. Without loss of generality we can assume $k=1$. We want to show that the spin structures corresponding to $P_2$ and $P_3$ must then coincide.

In this case, the closed non-trivial curve, the green curve in Figure \ref{fig:g3spec}, through the branch points $P_2$ and $P_3$ denoted by $\gamma_{23}$,  is homologous to the difference of the closed (red) curve $\gamma_S$ through the Sym-points $S_1$ and $S_2$ and the closed (blue) curve $\gamma_{01}$ connecting $0$ and $P_1$. Let  $\theta_i = d\log \nu_i$ ($i=1,2$)  be the logarithmic differentials of the monodromy maps $\nu_i$ and consider integrals of $\theta_i$ along these curves. Using the hyper-elliptic symmetry we want to show that

\[2 \int_{P_2}^{P_3} \theta_i = \int_{\gamma_{23}} \theta_i \in 4 \pi i \Z\]

Since $0$ and $P_1$ correspond to the same spin structure by assumption we first show that
 \[
 \int_{\gamma_{01}} \theta_k \in 4 \pi i \Z\]
 for $k=1,2$. As $\theta_k$ has trivial residue, we
 can interpret the above integral as the integral of $\theta_k$ along any curve homotopic to $\gamma_{01}$ which does not pass through $0\in\Sigma$.
 
 In order to analyze the integral, we apply a renormalization:
 For $k=1,2$ there exist a closed 1-form $\eta_k$ on $\Sigma\setminus\{0\}$ with support in a small neighborhood of $0$ which satisfies $\sigma^*\eta_k=-\eta_k$ and such that \[\theta_k+\eta_k\]
 extends smoothly through $0,$ compare with the limiting analysis of \cite[Proposition 3.10]{HitchinHM}. Note that $\int_{\gamma_{01}}\eta_k=0$. Using an analogous computation as in \cite[Proposition 3.10]{HitchinHM} again, we can 
 associate to $0\in\Sigma$  renormalized eigenvalues $\tilde \nu_1$ and $\tilde \nu_2$  which take values in $\{\pm1\}$
 and encode the spin structure of the surface. The sign is encoded in the parity of the constant part of the expansion
 in \cite[Proposition 3.10]{HitchinHM}. Then, we obtain
 \[ \int_{\gamma_{01}} \theta_k=\int_{\gamma_{01}} \theta_k+\eta_k=2\int_{\gamma_{01}^+} \theta_k+\eta_k\in 4\pi i\Z,\]
 where $\gamma_{01}^+$ is given by a part of $\gamma_{01}$ which goes from $0$ to $P_1$, and the last equality follows
 from the fact that the values $ \nu_k(0)$ and $\nu_k(P_1)$ coincide as $0$ and $P_1$ correspond to the same spin structure.

Thus it remains to prove that the integral  of $\theta_i$ along the red curve $\gamma_S$ satisfies
 $$\int_{\gamma_{S}} \theta_k \in 4\pi i\Z.$$ 
 This follows from  the $\rho$-symmetry of the spectral data:  the integral of $\theta_k$ along the red curve $\gamma_S$ is  twice the integral along the curve $\tilde \gamma$ which is defined to be the part of $\gamma_S$ from the  point $S_1$ lying over the Sym point $\mu_1$ to the point $S_2$ lying over $-\bar \mu_1^{-1}$, i.e.,
 \[\int_{\gamma_S}\theta_k=\int_{\tilde\gamma}\theta_k+\int_{\sigma\circ\rho(\tilde\gamma)}\theta_k=\int_{\tilde\gamma}\theta_k-\int_{\tilde\gamma}\overline{\theta_k}=2\int_{\tilde\gamma}\theta_k,\]
 where the last equality uses the fact that the integral takes imaginary values. The well-definedness of $f$ then gives $\int_{\tilde\gamma}\theta_k\in2\pi i\Z$ proving the claim.
\end{proof}

\begin{The}\label{H3}
The Willmore energy of a constrained Willmore torus $f: T^2 \longrightarrow S^3$ of Babich-Bobenko type is at least $8\pi.$
\end{The}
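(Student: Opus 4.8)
The plan is to bound the Willmore energy from below by producing enough quaternionic-linearly-independent holomorphic sections of $V/L$ (possibly after an untwisting) and then invoking the Pl\"ucker estimate of Theorem \ref{Plücker}. Since $[1,0]$ and $[0,1]$ are always holomorphic sections with trivial monodromy, it suffices to exhibit a third $\H$-independent such section to land in the odd case $k=3$, which already gives $W(f)\geq\pi(9-1)=8\pi$. The whole argument rests on the branch points of the hyper-elliptic spectral curve $\lambda\colon\Sigma\to\Sigma/\sigma\cong\CP^1$ (Corollary \ref{coshgordon} and its non-minimal analogue), which by Lemma \ref{holoexistence} correspond to holomorphic sections of $V/L$ carrying $\Z_2$-monodromy, i.e.\ a spin structure of $T^2$; the argument is meant to run uniformly for $H=0$ and $0<|H|<1$.

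First I would dispose of the generic case of spectral genus $g\geq 3$ using Lemma \ref{spinstructure}, which splits it in two. If some branch point over the punctured unit disc corresponds to the trivial spin structure, then Lemma \ref{holoexistence} furnishes a non-constant holomorphic section $\psi_3$ of $V/L$ with trivial monodromy; as $\psi_3$ is the prolongation of a $\nabla^{\mu_0}$-parallel section attached to a spectral point distinct (mod $\rho$) from the Sym points carrying $[1,0],[0,1]$, the independence mechanism of Lemma \ref{linearindependence} shows $\psi_3$ is $\H$-independent of $[1,0],[0,1]$ (and when $\mu_0=1$ non-constancy alone suffices). Hence $k\geq 3$ and $W(f)\geq 8\pi$.

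In the remaining subcase two branch points $P_2,P_3$ over the punctured disc share the same non-trivial $\Z_2$-monodromy $\chi$, and Lemma \ref{linearindependence} makes the corresponding sections $\psi_1,\psi_2$ quaternionic-independent. These carry monodromy $\chi$ rather than trivial monodromy, so they do not directly enlarge $H^0(V/L)$; to use them I would pass to the double cover $\pi\colon\hat T^2\to T^2$ determined by $\chi$, on which $\hat f=f\circ\pi$ is again a Babich-Bobenko torus with $W(\hat f)=2W(f)$. On $\hat T^2$ the monodromy $\chi$ becomes trivial, so $\hat\psi_1,\hat\psi_2$ descend to genuine elements of $H^0(\widehat{V/L})$, while the pullbacks $\widehat{[1,0]},\widehat{[0,1]}$ are invariant under the deck transformation $\tau$ and $\hat\psi_1,\hat\psi_2$ are $\tau$-anti-invariant; separating by the $\tau$-eigenvalue $\pm1$ shows all four are $\H$-independent. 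Thus $k(\hat f)\geq 4$, giving $W(\hat f)\geq 16\pi$ and $W(f)\geq 8\pi$. Here one uses that the Willmore energy is unchanged under twisting by a flat bundle and is multiplied by the degree under covers, and that the estimate is applied to the regular family $\nabla^\mu$, not to the singular $\tilde\nabla^\lambda$.

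The main obstacle I anticipate is the low spectral genus: Lemma \ref{spinstructure} only controls $g\geq 3$, and for $g=1$ there is a single usable branch point, so if its monodromy is non-trivial the double-cover count only reaches $k(\hat f)\geq 3$ and $W(f)\geq 4\pi$, which is too weak. I would close this gap either by arguing that a genuinely Babich-Bobenko torus (for which $\rho\circ\sigma$ is fixpoint free) cannot have $g=1$, the low-genus real spectral curves being exhausted by the CMC-in-$S^3$ family where $\rho\circ\sigma$ has fixed points, or by ruling out $g=1$ directly through the closing/period conditions. A secondary technical point to verify carefully is the spin-structure bookkeeping of Lemma \ref{spinstructure} (the renormalized eigenvalues at $\lambda=0$ and the period integrals along $\gamma_S,\gamma_{01},\gamma_{23}$), on which the entire dichotomy hinges.
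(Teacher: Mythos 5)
Your treatment of the generic case $g\geq 3$ is essentially the paper's own argument: the dichotomy of Lemma \ref{spinstructure}, a third trivial-monodromy section from Lemma \ref{holoexistence} giving $k\geq 3$ and hence $\mathcal W(f)\geq 8\pi$, or else the untwisting on the spin double cover where the two sections with the same non-trivial $\Z_2$-monodromy from Lemma \ref{linearindependence} join the pullbacks of $[1,0],[0,1]$ to give $k\geq 4$, $\mathcal W\geq 16\pi$ upstairs and $8\pi$ downstairs. However, you leave one branch of the dichotomy open: Lemma \ref{spinstructure} allows the trivial monodromy to occur \emph{at} $\lambda=0$ (it speaks of the unit disc $D$, not only the punctured disc $D_*$), and there Lemma \ref{holoexistence} produces no holomorphic section, since it explicitly excludes branch points over $\lambda=0,\infty$. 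The paper closes this case by a different mechanism: the renormalized eigenvalues at $\lambda=0$ encode the spin structure of the immersion, so trivial monodromy at $\lambda=0$ means $f$ has trivial spin structure; by \cite{PinkallHomotopy} such a torus cannot be embedded, and Li--Yau \cite{LiYau} then forces $\mathcal W(f)\geq 8\pi$. Your Case A, restricted to branch points over the punctured disc, does not cover this.

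The low-genus gap you flag is real, but both of your proposed fixes would fail, because Babich-Bobenko tori of spectral genus $1$ \emph{do exist}: the non-existence or closing-condition route is a dead end. The paper's actual argument is that for $g\in\{0,1\}$ the torus is equivariant by \cite{Hel1}; $g=0$ gives a homogeneous torus, which would lie entirely inside hyperbolic $3$-space and so cannot be of Babich-Bobenko type; $g=1$ gives a rotational torus obtained by rotating a closed wavelike elastic curve in $\mathcal H^2$ about the infinity boundary, and the only closed curves in this class are the elastic figure-eight curves, whose rotation tori are non-embedded (see \cite{Hel2}, \cite{Steinberg}). The bound $\mathcal W\geq 8\pi$ for $g=1$ is then again obtained from non-embeddedness via Li--Yau, not from the Pl\"ucker estimate and not from ruling the case out. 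So the correct resolution of your anticipated obstacle is classification plus non-embeddedness plus \cite{LiYau} --- a mechanism your proposal never invokes --- and without it (and without the $\lambda=0$ trivial-spin case above) the proof is incomplete at exactly the two places where the paper switches from spectral-curve counting to regular-homotopy and embeddedness arguments.
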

\begin{figure}
\vspace{-0cm}
\hspace{2.5cm}
\includegraphics[width=1.\textwidth]{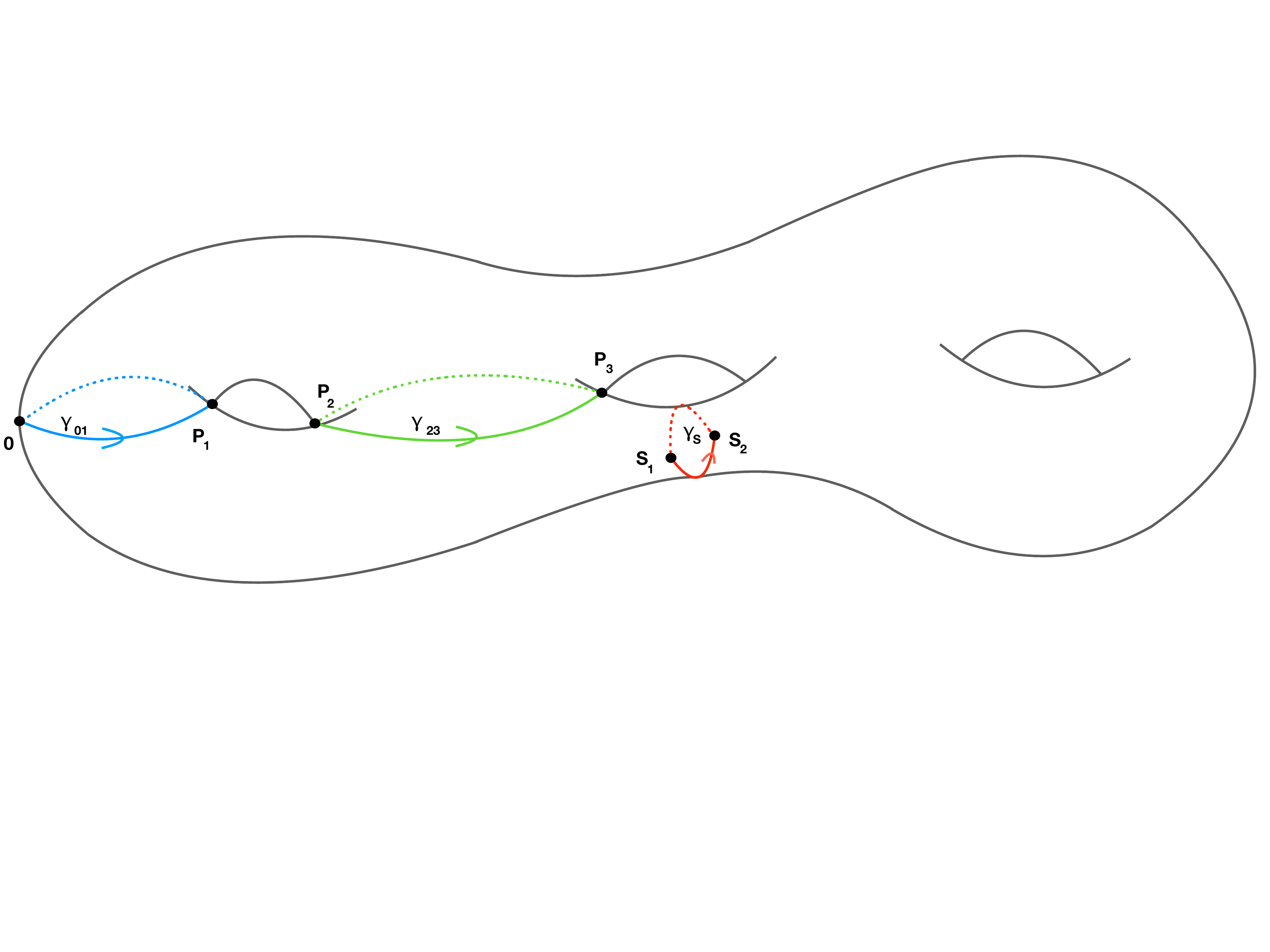}
\vspace{-3.5cm}
\caption{
\footnotesize
A spectral curve of genus 3, with branch points and certain cycles.}
\label{fig:g3spec}
\end{figure}

\begin{proof}
Since the spectral curve $\Sigma$ admits an involution $\rho$ covering $\lambda\mapsto-\bar\lambda^{-1}$, it must be of odd genus $g$, or $\lambda$ is unbranched. For $g\in \{0,1\}$ the surface $f$ is equivariant, see \cite{Hel1}. For $g=0$ the surface must be homogenous. This case cannot appear, since it would be a surface entirely contained in hyperbolic 3-space. For $g=1$ the surface is rotational symmetric and obtained by rotating a closed wavelike elastic curve in the hyperbolic plane $\mathcal H^2$ around the infinity boundary of $\mathcal H^2.$ The only periodic solution in this class is the family of elastic figure-$8$ curves in $\mathcal H^2$. These surfaces are non-embedded, see for example \cite{Hel2} or \cite{Steinberg}, and therefore they have Willmore energy above $8\pi$ by \cite{LiYau}.

Let $g \geq 3$. By Lemma \ref{holoexistence} we can associate to every branch point of $\Sigma$ a holomorphic section $\psi$ with $\Z_2$-monodromy of $V/L$. There exist exactly $4$ possible $\Z_2$-monodromies for $\psi$ arising from the $4$ different spin structures of $T^2$. To be more concrete, the two monodromy maps $\nu_i$ of $\psi$ satisfies:
$$(\nu_1(\psi),\nu_2(\psi))\in\{(1,1),(1,-1),(-1,1),(-1,-1)\}.$$ 

Every $\psi$ with $\pm 1$ monodromy gives rise to a proper holomorphic section of $V/L$ considered as a bundle over a suitable double cover $\tilde T^2$ of $T^2.$ Thus the theorem follows from the previous Lemma by applying the Pl\"ucker estimate (Theorem \ref{Plücker}). If the trivial monodromy arises over $\lambda =0$, the immersion $f$ has trivial spin structure and by \cite{PinkallHomotopy}  the surface cannot be embedded and hence its Willmore energy is at least $8\pi$. 
\end{proof}



\bibliographystyle{amsplain}
\bibliography{references}

\end{document}